\documentclass[10pt]{amsart}

\usepackage{comment}
\usepackage{mathtools}
\setcounter{section}{0}

\usepackage{amsmath}
  \usepackage{paralist}
  \usepackage{graphics} 
  \usepackage{epsfig} 
\usepackage{graphicx}  \usepackage{epstopdf}

 \usepackage[colorlinks=true]{hyperref}
\hypersetup{urlcolor=blue, citecolor=red}
  \textheight=8.2 true in
   \textwidth=5.2 true in
    \topmargin 30pt
     \setcounter{page}{1}



\newtheorem{theorem}{Theorem}[section]

\newtheorem*{main*}{Main Theorem}
\newtheorem{lemma}[theorem]{Lemma}
\newtheorem{proposition}[theorem]{Proposition}

\newtheorem{question}[theorem]{Question}
\theoremstyle{definition}
\newtheorem{definition}[theorem]{Definition}
\newtheorem{remark}[theorem]{Remark}

\newtheorem*{TheoremA}{Theorem A}
\newtheorem*{TheoremB}{Theorem B}
\newtheorem*{TheoremC}{Theorem C}
\newtheorem*{TheoremD}{Theorem D}

\newtheorem*{CorollaryA1}{Corollary A.1}
\newtheorem*{CorollaryA2}{Corollary A.2}

\newtheorem*{CorollaryC1}{Corollary C.1}
\newtheorem*{CorollaryC2}{Corollary C.2}
\newtheorem*{CorollaryC3}{Corollary C.3}
\newtheorem*{CorollaryD1}{Corollary D.1}
\newtheorem*{CorollaryD2}{Corollary D.2}

\title[Unstable entropies and Dimension Theory of Partially Hyperbolic Systems]{Unstable entropies and Dimension Theory of Partially Hyperbolic Systems}

\def\a{\alpha}
\def\b{\beta}
   
\def\d{\delta}   
   
 \def\e{\epsilon}

\def\ae{\text{-a.e.}\ }

\def\P{{\mathcal P}}

\def\CW{{\mathcal W}}

\def\CU{{\mathcal U}}
\def\CE{{\mathcal E}}

\def\M{{\mathcal M}}

\def\CG{{\mathcal G}}

\def\RR{{\mathbb R}}
\def\NN{{\mathbb N}}

\def\diam{\mathop{\hbox{{\rm diam}}}}

\def\loc{{\mathop{\hbox{\footnotesize  \rm loc}}}}

\def\disp{\displaystyle}

\author[]{Xueting Tian and Weisheng Wu$^{*}$}

\subjclass{}
 \keywords{}


\address{School of Mathematical Sciences, Fudan University, Shanghai, 200433, P.R. China}
 \email{xuetingtian@fudan.edu.cn}

\address{Department of Applied Mathematics, College of Science, China Agricultural University, Beijing, 100083, P.R. China}
 \email{wuweisheng@cau.edu.cn}

\thanks{$^*$Corresponding author.}

\begin{document}

\maketitle
\markboth{Unstable entropies and dimension theory}
{X. Tian and W. Wu}
\renewcommand{\sectionmark}[1]{}

\begin{abstract}
In this paper we define unstable topological entropy for any subsets (not necessarily compact or invariant) in partially hyperbolic systems as a Carath\'{e}odory dimension characteristic, motivated by the work of Bowen and Pesin etc.  We then establish some basic results in dimension theory for Bowen unstable topological entropy, including an entropy distribution principle and a variational principle in general setting. As applications of this new concept, we study unstable topological entropy of saturated sets and extend some results in \cite{Bo, PS2007}. Our results give new insights to the multifractal analysis for partially hyperbolic systems.
\end{abstract}

\section{Introduction}


The study of dimension theory (in particular, multifractal analysis) of hyperbolic dynamical systems
has drawn the attention of many researchers (\cite{Pesin,BaSa,BaSch,Takens,TV} and many others).
The general concept of multifractal analysis is to decompose the phase space into subsets of points which have a
similar dynamical behavior and to describe the size of these subsets from
the geometrical or topological viewpoint.
Sets with similar dynamical behavior include the basin set of an invariant measure or general saturated sets \cite{Bo,PS2007}, recurrent and dense sets \cite{T16,CT},  non-dense sets \cite{Urban,DT,Wu1,Wu2}, level sets and irregular sets of Birkhoff ergodic average \cite{Pesin,PP,BaSau,BaSa,BaSch,BSS,BC,TV,PS2007,LLST,Thompson2012,KS,Takens}, level sets and irregular sets of Lyapunov exponents \cite{BG,PeSa,FengH,Tian2}, which have been studied a lot
by using various measurements such as Hausdorff dimension, topological entropy or pressure, Lebesgue measure and distributional chaos etc. Here the topological entropy used was introduced by  Bowen \cite{Bo} to characterize the dynamical complexity of arbitrary sets which are not necessarily compact nor invariant from the perspective of ``dimensional'' nature.
Pesin and Pitskel$'$ further developed Bowen's approach and proposed the notion of topological pressure of a general subset (cf. \cite{PP}). In Pesin's book \cite{Pesin}, a general Carathe\'{o}dory construction is presented, and the notion of Carathe\'{o}dory dimension is defined. Among Carathe\'{o}dory dimensions are Hausdorff dimension, $q$-dimension, topological pressure including topological entropy, etc.

The dimension theory of hyperbolic dynamical systems has been  well-studied with mature methods including thermodynamic formalism approach (\cite{BaSa,BD} etc.) and orbit gluing approach (\cite{TV,PS2007} etc.), for which the uniqueness of equilibrium states and certain weak form of specification are essential, respectively. However, the dimension theory of dynamical systems beyond uniform hyperbolicity is far away from being well-studied. There are only few developments for nonuniformly hyperbolic or expanding systems (without zero Lyapunov exponents) \cite{LLST,TianVar}.  Another important type of dynamical systems beyond uniform hyperbolicity is partially hyperbolic diffeomorphisms which admit zero Lyapunov exponents. However, to the best of our knowledge, there are no results on dimension theory of partially hyperbolic diffeomorphisms.

The crucial techniques of thermodynamic formalism and orbit gluing approach in hyperbolic case may not work in general partially hyperbolic systems. Either the uniqueness of equilibrium states or weak forms of specification are very rare. For instance, a partially hyperbolic (but not hyperbolic) system can not have specification robustly since  a system with specification robustly if and only if it is transitive Anosov \cite{SSY}. And a partial hyperbolic system with two saddles of different indexes can not have specification \cite{SVY}.  Nevertheless,  there are some progress in positive direction. For example, some special partially hyperbolic systems including Ma\~{n}\'{e}'s examples have uniqueness of equilibrium states  established in \cite{CliTho,CFT}. This is mainly because such systems satisfy non-uniform versions of specification and expansiveness. As for orbit gluing approach, we notice that a quasi-shadowing property and a center specification property are obtained in \cite{HZZ} and \cite{WZ} which might be helpful.

Partial hyperbolicity includes the hyperbolic part and the center part. Since the latter may have zero exponents, one can firstly consider dynamical complexity caused by the hyperbolic part.
In their seminal papers \cite{LY1,LY2}, Ledrappier and Young
gave a characterization of the SRB measures,
and more generally,
 established the so-called dimension formula for all invariant measures.
They
 introduced a hierarchy of metric entropies $h_i=h_i(f)$,
each of which corresponds to different largest positive Lyapunov exponents,
and is regarded as the entropy caused by different levels of
unstable manifolds. A natural problem is then to study the \emph{topological entropy} caused by different hierarchy of
unstable manifolds and its relationship with the metric entropy considered by Ledrappier and Young. For $C^1$-partially hyperbolic diffeomorphisms, Hu, Hua and Wu recently introduced in \cite{HHW} a concept called unstable topological entropy to characterize dynamical complexity of the whole system caused by unstable manifolds and established a variational principle relating the unstable topological entropy with the unstable metric entropy. Another useful result is the upper semi-continuity of unstable metric entropy (cf. \cite{HHW,Yang}).

Inspired by the work of Bowen and Pesin \cite{Bo,PP, Pesin}, in the present paper we propose a concept called \emph{Bowen unstable topological entropy} to study dynamical complexity of general subsets in partially hyperbolic systems. In particular, the Bowen unstable topological entropy of the whole space coincides with the unstable topological entropy of the system in \cite{HHW}. We then establish some basic results in dimension theory for Bowen unstable topological entropy, including an entropy distribution principle, and a variational principle for any compact (not necessarily invariant) subset between its Bowen unstable topological entropy and unstable metric entropy of probability measures supported on this set. As applications of this new concept, we study Bowen unstable topological entropy of saturated sets and extend some results in \cite{Bo, PS2007} to the context of unstable entropies. The main ingredient in our multifractal analysis is the adapted orbit gluing approach for unstable entropies. The thermodynamic formalism approach to multifractal analysis based on Bowen unstable pressure will appear in a forthcoming paper \cite{TW}.

\subsection{Results}
 Through  this paper we consider partially hyperbolic diffeomorphisms.   Let $M$ be an $n$-dimensional smooth, connected, compact Riemannian manifold without boundary and let $f: M \to M$ be a $C^1$-diffeomorphism. $f$ is called a \emph{partially hyperbolic diffeomorphism} (abbreviated as \emph{PHD}) if there exists a nontrivial $Df$-invariant splitting of the tangent bundle $TM= E^s \oplus E^c \oplus E^u$ into so-called stable, center, and unstable distributions, such that all unit vectors $v^{\sigma} \in E^\sigma(x)$ ($\sigma=s, c, u$) with $x\in M$ satisfy
\begin{equation*}
\|D_xfv^s\| < \|D_xfv^c\| < \|D_xfv^u\|,
\end{equation*}
and
\begin{equation*}
\|D_xf|_{E^s(x)}\| <1, \ \ \ \text{\ and\ \ \ \ } \|D_xf^{-1}|_{E^u(x)}\| <1,
\end{equation*}
for some suitable Riemannian metric on $M$. The distributions $E^s$, $E^c$, $E^u$ are H\"{o}lder continuous over $M$. The stable distribution $E^s$ and unstable distribution $E^u$ are integrable: there exist so-called stable and unstable foliations $W^s$ and $W^u$ respectively such that $TW^s=E^s$ and $TW^u=E^u$. See for example \cite{RRU} for the basic properties of partially hyperbolic diffeomorphisms. 

The first aim of this paper is to set up a theory for unstable entropies of general subsets over PHDs in the framework of Carathe\'{o}dory dimensions. We will define the notions of \emph{Bowen unstable topological entropy} and \emph{uppper capacity unstable topological entropy} for arbitrary subsets which are not necessarily compact or $f$-invariant. Usually it is hard to compute Bowen unstable topological entropy directly. An effective way is to estimate it through an entropy distribution principle. We define unstable metric entropy for any Borel probability measure on $M$ which is not necessarily $f$-invariant. See Section 2 for the detailed definitions and notations.  Denote by $\M(M)$ ($\M_f(M)$) the set of Borel probability measures ($f$-invariant Borel probability measures respectively) on $M$. Our first result is the following Billingsley type theorem or entropy distribution principle:
\begin{TheoremA}\label{ThmA}
Let $f: M \to M$ be a $C^1$-PHD, $Z$ a Borel subset of $M$, $\mu \in \M(M)$, and $0<s<\infty$.
\begin{enumerate}
  \item If $\underline{h}_\mu^u(f,x)\leq s$ for every $x\in Z$, then $h_{\text{\text{B}}}^u(f,Z)\leq s$.
  \item If $\underline{h}_\mu^u(f,x)\geq s$ for every $x\in Z$ and $\mu(Z)>0$, then $h_{\text{\text{B}}}^u(f,Z)\geq s$.
  \end{enumerate}
\end{TheoremA}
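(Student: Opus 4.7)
The plan is to adapt the classical Billingsley--Pesin scheme for Bowen topological entropy (see \cite{Bo,PP,Pesin}) to the unstable setting. The three key ingredients I would use are \textbf{(a)} a Brin--Katok-type identification of the local unstable entropy, namely $\underline{h}_\mu^u(f,x)=\liminf_{n\to\infty}-\frac{1}{n}\log\mu_x^u(B_n^u(x,\delta))$ for small $\delta>0$, where $\mu_x^u$ denotes the conditional of $\mu$ on the local unstable plaque through $x$; \textbf{(b)} a Besicovitch/Vitali-type covering lemma for unstable Bowen balls, producing countable subcovers of bounded multiplicity on each unstable plaque; and \textbf{(c)} the disintegration of $\mu$ along unstable plaques, used to transfer plaque-wise conditional bounds to global $\mu$-bounds.

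For the upper bound (1), fix $\epsilon>0$. By \textbf{(a)}, for every $x\in Z$ and every $N$ there exists $n_x\geq N$ with $\mu_x^u(B_{n_x}^u(x,\delta))\geq e^{-n_x(s+\epsilon)}$, so $\{B_{n_x}^u(x,\delta):x\in Z\}$ covers $Z$. Applying \textbf{(b)} yields a countable subcover $\{B_{n_i}^u(x_i,\delta)\}$ with multiplicity bounded by some constant $K$ on each plaque. Summing the chosen lower bounds and integrating transversally by \textbf{(c)} gives
$$\sum_i e^{-n_i(s+\epsilon)}\leq K\mu(M)=K,$$
uniformly in $N$. Hence the Carath\'eodory outer measure of $Z$ at parameter $s+\epsilon$ is finite, so $h_{\text{B}}^u(f,Z)\leq s+\epsilon$; letting $\epsilon\to 0$ gives (1).

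For the lower bound (2), fix $\epsilon>0$ and define
$$Z_k=\{x\in Z:\mu_x^u(B_n^u(x,\delta))\leq e^{-n(s-\epsilon)}\text{ for every }n\geq k\}.$$
By \textbf{(a)} and the hypothesis, $Z=\bigcup_k Z_k$, so $\mu(Z_{k_0})>0$ for some $k_0$. Any cover of $Z_{k_0}$ by unstable Bowen balls $\{B_{n_i}^u(x_i,\delta)\}$ with $n_i\geq k_0$ then satisfies, after disintegration along unstable plaques,
$$\sum_i e^{-n_i(s-\epsilon)}\geq\sum_i\mu_{x_i}^u(B_{n_i}^u(x_i,\delta))\geq\mu(Z_{k_0})>0.$$
This forces the Carath\'eodory outer measure at parameter $s-\epsilon$ to be strictly positive, giving $h_{\text{B}}^u(f,Z)\geq h_{\text{B}}^u(f,Z_{k_0})\geq s-\epsilon$; letting $\epsilon\to 0$ completes (2).

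The hard part will be ingredient \textbf{(c)}: reconciling the conditional measures $\mu_x^u$, which live on the individual $\mu$-null unstable plaques, with the ambient measure $\mu$ appearing in the hypothesis $\mu(Z)>0$. Concretely, one must localize the covers in foliation boxes, verify that the Besicovitch-selected subfamilies remain jointly measurable with respect to a transversal, and check that the transverse integration does yield $\mu(Z_{k_0})$ as a lower bound in (2) and a multiplicity-controlled total mass bound in (1). A secondary difficulty is the Besicovitch covering lemma adapted to unstable Bowen balls; this is comparatively mild since each $B_n^u(x,\delta)\subset W_{\text{loc}}^u(x)\cap B(x,\delta)$, so the classical multiplicity bound at the geometric scale $\delta$ applies.
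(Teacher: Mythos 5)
Your skeleton (Brin--Katok-type local quantity, level sets $Z_k$, a covering lemma, then a mass count against conditional measures) is the same as the paper's, but the way you connect the conditional measures to the definition of $h^u_{\text{B}}$ is where the argument breaks, and it breaks in both directions. The point you are missing is that $h^u_{\text{B}}(f,Z)$ is a \emph{supremum over plaques} of $h_{\text{B}}(f,\overline{W^u(x,\delta)}\cap Z)$, so every Carath\'eodory sum that needs to be estimated lives on a single unstable plaque, where all the conditional measures $\mu_{y}^{\eta}$ coincide with one fixed probability measure $\mu_x^{\eta}$. In part (1) your transverse integration yields a bound on an integral over plaques, which controls the plaque-wise sums only for almost every plaque, whereas the definition requires a bound for \emph{every} $x$; the paper instead fixes $x$, applies a $5r$/Vitali-type lemma (Lemma 3.1, after Ma--Wen) inside $\overline{W^u(x,\delta)}\subset\eta(x)$ to get \emph{disjoint} $u$-Bowen balls whose $3\epsilon$-dilates cover, and then bounds $\sum_i e^{-n_i(s+\rho)}\le\sum_i\mu_x^{\eta}(B^u_{n_i}(y_i,\epsilon))\le 1$ by disjointness against the single measure $\mu_x^{\eta}$. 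No multiplicity constant and no transverse integration are needed. Relatedly, your justification of ingredient (b) is not valid: Bowen balls of varying order $n_i$ all sit inside the same geometric $\delta$-ball and can be nested with unbounded overlap, so "the classical multiplicity bound at scale $\delta$" gives no control; the correct tool is the $5r$-lemma adapted to Bowen balls.

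In part (2) the displayed inequality $\sum_i\mu_{x_i}^u(B^u_{n_i}(x_i,\delta))\ge\mu(Z_{k_0})$ is false as written: the summands are conditional masses on different, individually $\mu$-null plaques, and a countable sum of such quantities cannot reconstruct the ambient measure $\mu(Z_{k_0})$. The paper's fix is to use the disintegration once, qualitatively, to find a \emph{single} plaque with $\mu_x^{\eta}(Z_{k,N_0}\cap\eta(x))>0$, and then to run the entire estimate inside that plaque against the one measure $\mu_x^{\eta}$: any cover of $Z_{k,N_0}\cap\eta(x)$ by balls $B^u_{n_i}(x_i,\epsilon/2)$ is first replaced by $B^u_{n_i}(y_i,\epsilon)$ with $y_i\in Z_{k,N_0}\cap\eta(x)$ (so that the decay hypothesis applies at the new centers --- a step you also skip), and then $\sum_i e^{-n_i(s-\rho)}\ge\sum_i\mu_x^{\eta}(B^u_{n_i}(y_i,\epsilon))\ge\mu_x^{\eta}(Z_{k,N_0})>0$ because these balls cover a set of positive $\mu_x^{\eta}$-measure. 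Since one plaque with positive lower bound suffices for the supremum, this closes part (2). So the difficulty you flagged as "the hard part" is real, but its resolution is not transverse integration; it is the plaque-wise localization built into the definition of unstable Bowen entropy.
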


As a corollary, we can generalize Theorem 1 in \cite{Bo} for Bowen unstable entropy:
\begin{CorollaryA1}\label{AA1}
Let $f: M \to M$ be a $C^1$-PHD. Then $h_{\text{\text{B}}}^u(f,Z)\geq \underline{h}_\mu^u(f)$ for any $\mu \in \M(M)$ with $\mu(Z)=1$.
\end{CorollaryA1}

The unstable topological entropy of $f$ defined in \cite{HHW} is the uppper capacity unstable topological entropy of $M$, which turns out to coincide with Bowen unstable topological entropy of $M$, i.e., $h_{\text{\text{UC}}}^u(f,M)=h_{\text{\text{B}}}^u(f,M)$. More generally, we have:

\begin{CorollaryA2}\label{AA2}
Let $f: M \to M$ be a $C^1$-PHD and $Z\subset M$ compact and $f$-invariant. Then $h_{\text{\text{B}}}^u(f,Z)=h_{\text{\text{UC}}}^u(f,Z)$.
\end{CorollaryA2}

A variational principle relating unstable topological entropy of $f$ and unstable metric entropy of invariant measures is
given in \cite{HHW}. In \cite{FH}, a variational principle is established relating Bowen topological entropy of any compact subset and metric entropy of Borel probability measures supported on it (see Theorem \ref{subsetvp} below). We find our new concept of Bowen unstable topological entropy of compact subsets also satisfies a similar variational principle so that it is suitable to study dynamical complexity of subsets in partial hyperbolicity.

\begin{TheoremB}\label{ThmB}
Let $f: M \to M$ be a $C^1$-PHD and $K\subset M$ a nonempty compact subset. Then
$$h_{\text{\text{B}}}^u(f,K)=\sup\{\underline{h}_{\mu}^u(f): \mu\in \M(M) \  \text{and\ } \mu(K)=1\}.$$
\end{TheoremB}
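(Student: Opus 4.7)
The plan is to prove the two inequalities separately. The easy direction
$$h_{\text{B}}^u(f,K) \geq \sup\{\underline{h}_\mu^u(f) : \mu\in \M(M),\ \mu(K)=1\}$$
is an immediate consequence of Corollary A.1 applied with $Z=K$: every admissible $\mu$ satisfies $h_{\text{B}}^u(f,K)\geq \underline{h}_\mu^u(f)$, so taking the supremum gives the bound.

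For the reverse inequality, I would adapt Feng and Huang's proof of the classical Bowen variational principle \cite{FH} to the unstable setting. Fix $s$ with $0<s<h_{\text{B}}^u(f,K)$; the goal is to produce $\mu\in\M(M)$ supported on $K$ with $\underline{h}_\mu^u(f,x)\geq s$ for $\mu$-almost every $x$, from which $\underline{h}_\mu^u(f)\geq s$ follows by the convention of Section~2 (either by integrating the local unstable entropies, or by a Fatou-type passage from the Brin--Katok-style defining limit).

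To build such $\mu$, I would establish an unstable analogue of Frostman's lemma. For each small $\varepsilon>0$ and large $N\in\NN$, consider the covering outer measure from the Carath\'eodory construction for Bowen unstable entropy, restricted to covers of $K$ by unstable Bowen balls $B_n^u(x,\varepsilon)$ with $n\geq N$ and weighted by $e^{-ns}$. Since $s<h_{\text{B}}^u(f,K)$, this quantity stays bounded below by a positive constant uniformly in $N$. A Hahn--Banach duality argument as in \cite{FH} then yields an atomic probability measure $\mu_{N,\varepsilon}$ supported on $K$ with the Frostman-type bound $\mu_{N,\varepsilon}(B_n^u(x,\varepsilon))\leq C\,e^{-ns}$ for every $x\in M$ and every $n\geq N$. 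Passing to a weak* limit $\mu_\varepsilon$ as $N\to\infty$, then diagonalizing as $\varepsilon\to 0$, produces the desired $\mu$, and the Frostman condition transfers to the limit so that $\underline{h}_\mu^u(f,x)\geq s$ for $\mu$-a.e.\ $x$.

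The main obstacle I anticipate is establishing this Frostman lemma in the unstable, anisotropic setting. An unstable Bowen ball lives essentially inside a single unstable plaque while retaining a fixed $\varepsilon$-width transversally, so the Besicovitch- or $5r$-type covering lemma needed to drive the Hahn--Banach step must be set up using the uniform plaque geometry and the H\"older regularity of $W^u$ available in partial hyperbolicity. A second subtlety is that $K$ is assumed only compact and not $f$-invariant, so the limit $\mu$ need not be invariant; consequently the lower bound on $\underline{h}_\mu^u(f,x)$ must be extracted directly from the Frostman estimate and the definition of local unstable entropy, without invoking ergodic-theoretic tools such as Shannon--McMillan--Breiman, which is precisely why the statement is framed using lower local entropies of general (not necessarily invariant) Borel probability measures.
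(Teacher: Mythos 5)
Your easy direction is exactly the paper's: apply Corollary A.1 with $Z=K$ and take the supremum over admissible $\mu$. For the reverse inequality, however, your plan has a genuine gap, and it comes from trying to run the Frostman construction globally on $K$. The unstable Bowen ball $B_n^u(x,\epsilon)$ as defined in Section~2 is a subset of the single leaf $W^u(x)$; it has no transversal width. Consequently, for a measure $\mu$ spread across unstable plaques, the bound $\mu(B_n^u(x,\epsilon))\leq C e^{-ns}$ is typically vacuous (the left-hand side is $0$), and even if you thicken the balls transversally the estimate you obtain controls $\mu$ of a tube, not the conditional measures $\mu_x^\eta$ on unstable plaques that actually enter the definition of $\underline{h}_\mu^u(f,x)$. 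Passing from a Frostman bound for $\mu$ to one for $\mu_x^\eta$ is not addressed in your sketch and is not routine; this is the step that would fail as written. Covering a transversally spread $K$ by leafwise balls is likewise problematic.

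The fix --- and the paper's actual argument --- is to localize \emph{before} invoking any Frostman machinery. By Lemma \ref{smalldelta} one picks $x$ with $h_{\text{B}}(f,K\cap\overline{W^u(x,\delta)})\geq h^u_{\text{B}}(f,K)-\rho/2$, and then applies the classical Feng--Huang variational principle (Theorem \ref{subsetvp}) verbatim to the compact set $K\cap\overline{W^u(x,\delta)}$, with ordinary Bowen balls and the ordinary lower local entropy $\underline{h}_\mu(f,x)$. No unstable Frostman lemma is needed. The measure $\mu$ so produced is supported in $\overline{W^u(x,\delta)}$, hence in a single element of a partition $\eta\in\P^u$; thus $\eta$ is trivial for $\mu$ and $\mu=\mu_x^\eta$, and since $d^u$ is equivalent to $d$ on the plaque one gets $\underline{h}_\mu(f)=\underline{h}^u_\mu(f)$ via Lemma \ref{independence}, which converts the classical conclusion into the unstable one. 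On this route your anticipated difficulties about Besicovitch-type covers in the anisotropic geometry, and about the non-invariance of the limit measure, simply do not arise.
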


Our second aim is to study Bowen unstable topological entropy of saturated sets. Denote $\CE_n(x):=\frac{1}{n}\sum_{i=0}^{n-1}\d_{f^i(x)}$ where $\d_y$ is the Dirac measure at $y\in M$.
The limit-point set of $\{\CE_n(x)\}_n$ is a compact connected subset $V_f(x)\subset \M_f(M)$. For $K\subseteq \mathcal M_f(M)$ which is nonempty, compact and connected, the set $G_K=\{x:V_f(x)=K\}$ is called the \emph{saturated set of $K$}. The existence of saturated sets was firstly found by Sigmund in \cite{Sigmund} for hyperbolic systems (see \cite{LST} for nonuniformly hyperbolic case). In order to compare the complexity of different saturated sets, Pfister and Sullivan used topological entropy in \cite{PS2007} to estimate the complexity of a saturated set $G_K$  and established some equalities by using the metric entropy of measures in $K.$ Similar as in \cite{PS2007} we can first consider $$\prescript{K}{}{G}=\{x:V_f(x)\cap K\neq \emptyset\}$$ for general PHDs.
\begin{TheoremC}\label{ThmC}
Let $f: M \to M$ be a $C^1$-PHD, $K\subset \M_f(M)$ a closed subset.
Then
  $$h^u_{\text{B}}(f, \prescript{K}{}{G})\leq \sup\{h_\mu^u(f): \mu\in K\}.$$
\end{TheoremC}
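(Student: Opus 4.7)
The plan follows the classical upper-bound argument of \cite{PS2007}, with unstable Bowen balls $B^u_n(\cdot,\epsilon)$ in place of standard Bowen balls and the unstable metric entropy $h_\mu^u(f)$ in place of the usual $h_\mu(f)$. Fix $\gamma>0$ arbitrary and write $s:=\sup_{\mu\in K}h_\mu^u(f)$; it suffices to show $h_{\text{B}}^u(f,\prescript{K}{}{G})\leq s+\gamma$. First, using upper semi-continuity of $\mu\mapsto h_\mu^u(f)$ on $\M_f(M)$ (cf.\ \cite{HHW,Yang}) together with the compactness of $K$, I would cover $K$ by finitely many convex open sets $W_1,\dots,W_r\subset\M(M)$ on the closure of each of which every $f$-invariant measure has unstable entropy $<s+\gamma/2$; by the countable stability of $h_{\text{B}}^u$ it then suffices to bound $h_{\text{B}}^u$ of $\prescript{K}{}{G}\cap R_V$ for a single such $V:=W_i$.

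For this fixed convex open $V$, openness together with the very definition of $V_f(x)$ yields
$$\prescript{K}{}{G}\ \subset\ R_V:=\{x\in M:\CE_n(x)\in V\ \text{for infinitely many }n\}\ \subset\ \bigcup_{n\geq N}A_n(V)\quad\forall\,N\geq 1,$$
where $A_n(V):=\{x:\CE_n(x)\in V\}$. The heart of the proof is then a counting lemma: for every sufficiently small $\epsilon>0$ there exist $\eta\in(0,\gamma/2)$ and $N_0\in\NN$ such that for every $n\geq N_0$ the set $A_n(V)$ can be covered by at most $e^{n(s+\gamma/2+\eta)}$ unstable Bowen balls $B^u_n(\cdot,\epsilon)$. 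I would derive this by a Misiurewicz-type contradiction: a maximally $(n,\epsilon)$-unstable-separated subset $E_n\subset A_n(V)$ with $|E_n|>e^{n(s+\gamma/2+\eta)}$ produces the convex combination $\nu_n:=|E_n|^{-1}\sum_{x\in E_n}\CE_n(x)\in V$; any weak-$*$ accumulation point $\nu$ of $\nu_{n_k}$ is $f$-invariant, lies in $\overline V$, so satisfies $h_\nu^u(f)<s+\gamma/2$ by construction of $V$, while the leaf-wise separated-set calculation from the variational principle in \cite{HHW} forces $h_\nu^u(f)\geq s+\gamma/2+\eta$---a contradiction.

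Granted the counting lemma, for every $N\geq N_0$ the Carath\'eodory sum computing $h_{\text{B}}^u(f,\prescript{K}{}{G}\cap R_V)$ at parameter $s+\gamma$ and scale $\epsilon$ is bounded by
$$\sum_{n\geq N}e^{n(s+\gamma/2+\eta)}\cdot e^{-n(s+\gamma)}=\sum_{n\geq N}e^{-n(\gamma/2-\eta)}\ \xrightarrow{N\to\infty}\ 0,$$
hence $h_{\text{B}}^u(f,\prescript{K}{}{G}\cap R_V)\leq s+\gamma$; summing over $i=1,\dots,r$, letting $\epsilon\to 0$, and finally $\gamma\to 0$ finishes the proof. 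The main obstacle is the counting lemma itself: while its classical analogue is routine, transferring it to the present setting requires that the weak-$*$ limit of the \emph{non-invariant} averages $\nu_n$ remain in a region where unstable metric entropy is controlled (this dictates the choice of $V$ convex with entropy bounds valid on $\overline V$), and that Misiurewicz's lower estimate on $h_\nu^u$ from $(n,\epsilon)$-unstable-separated sets be available in the unstable framework---both of which should reduce to machinery already developed in \cite{HHW}.
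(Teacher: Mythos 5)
Your proposal is correct and follows essentially the same route as the paper: both arguments reduce to a Misiurewicz-type counting bound on $(n,\epsilon)$ $u$-separated subsets of $\{x:\CE_n(x)\in F\}$ inside a local unstable leaf (the separated-set estimate of Lemma \ref{vplemma}, imported from the variational principle of \cite{HHW}), then cover $K$ by finitely many such neighborhoods using compactness and sum the resulting Carath\'eodory series $\sum_{n\geq N}e^{-n(\cdot)}$. The only cosmetic difference is that you control the entropy of the weak-$*$ limit of the empirical averages via upper semi-continuity of $\mu\mapsto h^u_\mu(f)$ on the closure of a convex neighborhood, whereas the paper pins that limit down to each fixed $\mu\in K$ by shrinking nested convex neighborhoods (Proposition \ref{vpleq}, i.e.\ $\overline{s}(\mu)\leq h^u_\mu(f)$), thereby avoiding any appeal to upper semi-continuity; also make sure your counting lemma is stated for $A_n(V)\cap\overline{W^u(x,\delta)}$ rather than for $A_n(V)$ itself, since $h^u_{\text{B}}$ is computed leaf by leaf and Lemma \ref{vplemma} requires the separated sets to lie in a single local unstable plaque.
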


\begin{CorollaryC1}\label{CA1}
Let $f: M \to M$ be a $C^1$-PHD and $G_\mu:=\{x\in M: V_f(x)=\mu\}.$
Then
$$h^u_{\text{B}}(f, G_\mu)\leq h_\mu^u(f) \text{\ \ for any\ }\mu \in \M_f(M).$$
Furthermore, if $\mu$ is ergodic, then $h^u_{\text{B}}(f, G_\mu)= h_\mu^u(f)$.
\end{CorollaryC1}
In particular, Corollary C.1 extends Bowen's result on topological entropy of the set of generic points with respect to an ergodic measure in \cite{Bo} to the setting of unstable entropy.

We obtain the following inequality for saturated set $G_K$.
\begin{CorollaryC2}\label{CA2}
Let $f: M \to M$ be a $C^1$-PHD and $K\subset \M_f(M)$ be non-empty, connected and compact. Then
$$h^u_{\text{B}}(f, G_K)\leq \inf\{h_\mu^u(f): \mu\in K\}.$$
\end{CorollaryC2}

Birkhoff Ergodic Theorem is a classical and basic way to study dynamical orbits by describing asymptotic behavior from the probabilistic viewpoint of a given observable function. Some concepts in the theory of multifractal analysis such as
level sets of Birkhoff ergodic average, derive from the Birkhoff Ergodic Theorem.  It was established a variational principle in \cite{BaSa} for level sets of hyperbolic systems (also see \cite{TV,PS2007} for results obtained by using specification-like properties). Given a continuous function $\varphi: M\to \RR$, for any $a\in  \mathbb{R}$,  consider the level set
\[R_{\varphi}(a) :=\left\{ {x\in M}: \lim_{n\to\infty}\frac1n\sum_{i=0}^{n-1}\varphi(f^ix)=a\right\}.\]

 \begin{CorollaryC3}\label{CorC3}
 Let $f: M \to M$ be a $C^1$-PHD and $\varphi:M\rightarrow \mathbb{R}$ a continuous function. Then
 for any $a\in \mathbb{R}$,
 $$h^u_{\text{B}}(f,R_\varphi(a))\leq\sup\left\{h^u_\mu(f): \int \varphi d\mu =a, \ \mu\in \M_f(M)\right\}.$$
 \end{CorollaryC3}

When the paper was being written we found that recently Ponce \cite{Pon} used Bowen's original ideas \cite{Bo}
to define unstable topological entropy of subsets and get Corollaries A.1, A.2, and the ``furthermore'' part of Corollary C.1 for $C^{1+\a}$-PHDs. Here we use the methods of Carathe\'{o}dory dimension to define unstable topological entropy of subsets and develop more general results.

\subsection{Questions}
From above results, our Bowen unstable topological entropy seems to be a
suitable concept to study lots of things in partial hyperbolicity. And one can define refined Bowen unstable topological entropy  when the unstable direction can be decomposed into several sub-directions. One natural question similar as  the motivation of  \cite{LY1,LY2}  on dimension formula is  whether the (whole) unstable topological entropy equals to the sum of Bowen unstable topological entropy on sub-unstable bundles.  However, we do not know the techniques of the present paper are enough or not.

It was proved in  \cite{PS2007} that if the system has  $g$-almost product  property and uniform separation property (see  \cite{PS2007} for their detailed definitions), then  for any nonempty, weak$^*$-compact and connected set $K \subset {\mathcal M}_f(M)$,
$$h_{\text{B}}(f, G_K)\geq \inf\{h_\mu(f): \mu\in K\},$$
where $G_K:=\{x\in M: V_f(x)=K\}.$ These assumptions hold for topological mixing locally maximal hyperbolic  sets. For the unstable entropy considered in the present paper, a natural question arises:
\begin{question}\label{QueD}
Let $f: M \to M$ be a $C^1$-PHD. Then  for any non-empty, connected and compact subset
$K\subset \M_f(M)$, do we have
$$h^u_{\text{B}}(f, G_K)\geq \inf\{h_\mu^u(f): \mu\in K\}?$$
\end{question}

\begin{remark}
This question  is important because the positive answer will imply that the inequality in Corollary C.3 is an equality, by following the proof of Proposition 7.1 in \cite{PS2007}. The positive answer also implies that the irregular set has full unstable topological entropy by following the way in \cite{Tian}; see Corollary D.1 for a partial result on the completely irregular set.
\end{remark}

We give a partial answer for this question. Firstly, an unstable version of uniform separation is naturally true for PHDs. The detailed definition of unstable uniform separation property is given at the beginning of Section 5. Secondly, we give an estimate of topological entropy on $G_K$ by unstable metric entropy from below. As we mentioned, partial hyperbolicity has the quasi-shadowing property \cite{HZZ} and the center specification \cite{WZ}, but the shadowing orbit may be not a true orbit so that one can not use them to get nonempty saturated sets. Thus here we still assume $g$-almost product  property.
\begin{TheoremD}\label{ThmD}
Let $f: M \to M$ be a $C^1$-PHD.
\begin{enumerate}
  \item Then $f$ has unstable uniform separation property.
  \item If we assume further that $f$ has  $g$-almost product property, then for any non-empty, connected and compact subset
$K\subset \M_f(M)$, $$h_{\text{B}}(f, G_K)\geq \inf\{h_\mu^u(f): \mu\in K\}.$$
\end{enumerate}
\end{TheoremD}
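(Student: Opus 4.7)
The plan is to follow the saturation construction of Pfister--Sullivan \cite{PS2007}, using part (1) as the source of separated orbits and the $g$-almost product property for orbit gluing. For part (1), fix $\delta>0$ and choose $\epsilon^*>0$ smaller than an expansivity constant for $f$ on unstable plaques (available from uniform expansion along $E^u$). Given any ergodic $\mu$ and weak-$*$ neighborhood $F\ni\mu$, Birkhoff's theorem ensures $\CE_n(x)\in F$ on a set $A_n$ of $\mu$-measure tending to one; combining this with the Brin--Katok formula for unstable metric entropy, namely $\mu_x^u(B_n^u(x,\epsilon^*))\leq e^{-n(h_\mu^u(f)-\delta/2)}$, and a Fubini decomposition along unstable plaques, yields a maximal $(n,\epsilon^*)$-separated subset $\Gamma\subset X_{n,F}$ of cardinality at least $e^{n(h_\mu^u(f)-\delta)}$ for $n$ sufficiently large, which is the desired unstable uniform separation property.

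For part (2), let $K\subset\M_f(M)$ be nonempty, connected and compact and fix $\delta>0$. I would first extract a sequence of ergodic measures $\{\alpha_k\}_{k\geq 1}$ that becomes weak-$*$ dense in $K$ with $d(\alpha_k,\alpha_{k+1})<2^{-k}$ and $h_{\alpha_k}^u(f)\geq\inf_{\mu\in K}h_\mu^u(f)-\delta$; this is enabled by ergodic decomposition together with upper semicontinuity of $h^u_\mu$ proved in \cite{HHW,Yang}. For each $k$, apply (1) to obtain a $(n_k,\epsilon^*)$-separated set $\Gamma_k\subset X_{n_k,F_k}$ with $\#\Gamma_k\geq e^{n_k(h_{\alpha_k}^u(f)-\delta)}$, where $F_k$ is a shrinking weak-$*$ neighborhood of $\alpha_k$. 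The $g$-almost product property then allows any selection $(x_1,x_2,\ldots)$ with $x_k\in\Gamma_k$ to be shadowed by a single true orbit up to an error $\gamma$ (to be chosen small relative to $\epsilon^*$), with bounded time gaps between consecutive blocks. The collection $F$ of all shadowing orbits lies in $G_K$, since the empirical measure on the $k$-th block stays close to $\alpha_k$ and $\{\alpha_k\}$ is eventually dense in $K$; the construction simultaneously provides a natural Cantor coding of $F$.

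Finally, to bound $h_B(f,G_K)$ from below, one verifies that distinct selections produce orbits that are $(N_k,\epsilon^*/2)$-separated in the ordinary Bowen metric, where $N_k$ is the total gluing time through stage $k$: the unstable separation from (1) descends to ambient separation thanks to the local comparability of intrinsic unstable distance and ambient distance, while $\gamma<\epsilon^*/8$ guarantees that shadowing does not destroy separation. A standard Moran-cover counting argument (or an application of Theorem A with a measure supported by the Cantor structure) then yields $h_B(f,F)\geq\inf_{\mu\in K}h_\mu^u(f)-3\delta$, and the conclusion follows on letting $\delta\to 0$. The main obstacle is precisely this last coupling step: one must choose the $g$-almost product parameters, the shadowing tolerance $\gamma$, and the block lengths $n_k$ in a coordinated way so that the scale-$\epsilon^*$ separation supplied by (1) persists uniformly in $k$ through the gluing, and so that the gap times remain negligible compared to $\sum_{j\leq k}n_j$, ensuring $N_k^{-1}\log\prod_{j\leq k}\#\Gamma_j\to\inf_{\mu\in K}h_\mu^u(f)-O(\delta)$.
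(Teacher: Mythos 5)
There are two genuine gaps, both traceable to the same issue: the \emph{uniform separation} property that the Pfister--Sullivan machinery needs is not mere $(n,\epsilon^*)$-separation but $(\rho^*,n,\epsilon^*)$-separation, i.e.\ separation at a \emph{positive proportion} $\rho^*$ of the times $0,\dots,n-1$, with $\rho^*$ and $\epsilon^*$ uniform over all ergodic measures. Your part (1) produces, via the unstable Brin--Katok formula and a covering/Fubini argument, an $(n,\epsilon^*)$ $u$-separated subset of $M_{n,F}\cap \overline{W^u(x,\delta)}$ of cardinality $e^{n(h^u_\mu(f)-\delta)}$; this only guarantees that two distinct points separate at \emph{some} time $j\le n-1$. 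That is not enough: in the gluing step of part (2) the $g$-almost product property only shadows the prescribed orbit segments outside an exceptional set of $g(n)=o(n)$ times, so separation occurring at a single time can be destroyed. The paper's proof of (1) is designed exactly to fix this: it encodes orbits as words in a finite partition whose ``good'' elements $U_i$ are pairwise $\epsilon^*$-apart, shows via a conditional-entropy estimate that at least $e^{n(h^u_\mu(f)-\kappa')}$ words occur on a fiber of $\eta$, and then prunes to a maximal subset that is $n(2\kappa'+\rho^*)$-separated in \emph{Hamming distance}, losing only a factor $e^{n\phi(\rho^*+2\kappa')}(2k-1)^{n(\rho^*+2\kappa')}$. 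The Hamming separation is what yields $(\rho^*,n,\epsilon^*)$ $u$-separation with constants depending only on $\kappa$ (via a finite cover of $\M_f(M)$ by the neighborhoods $W_\nu$). Your argument contains no step that upgrades one-time separation to positive-density separation, and the Brin--Katok bound alone cannot supply it.

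In part (2) you reduce to a sequence of \emph{ergodic} measures $\alpha_k$ dense in $K$ with $h^u_{\alpha_k}(f)\ge\inf_{\mu\in K}h^u_\mu(f)-\delta$. This presupposes an unstable entropy-dense property, which does not follow from ergodic decomposition plus \emph{upper} semicontinuity of $\nu\mapsto h^u_\nu(f)$ (that inequality points the wrong way: individual ergodic components of $\mu$ may have much smaller unstable entropy than $h^u_\mu(f)$, and nearby ergodic measures need not have large entropy). The paper explicitly remarks that the unstable entropy-dense property is not known. Its route around this is Lemma 5.2: using the $g$-almost product property a first time, it glues $(\rho^*,n_0,\epsilon^*)$-separated sets coming from finitely many ergodic components $\mu_i$ of an arbitrary $\mu\in F$ (with $\sum\theta_i h^u_{\mu_i}(f)\ge h^u_\mu(f)-\kappa/3$) into a single $(\rho^*,n,\epsilon^*)$-separated subset of $M_{n,F}$ of size $e^{n(h^u_\mu(f)-\kappa)}$, thereby extending uniform separation from ergodic to all invariant measures; only then does it invoke the proof of Theorem 1.1 of Pfister--Sullivan verbatim with $h_\mu$ replaced by $h^u_\mu$, and here the condition $\rho^* n_0>2g(n_0)+1$ is precisely what keeps the glued orbits separated. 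You correctly identify the ``coupling'' of parameters as the main obstacle, but without the $\rho^*$-density from (1) and without Lemma 5.2's extension to non-ergodic measures, the construction as proposed does not close.
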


Theorem D applies to the study of the irregular set in multifractal analysis. We can also derive from Theorem D(1) a formula for the unstable metric entropy of an ergodic measure, which extends the formula given by Pfister-Sullivan (\cite{PS2007}). These are Corollaries D.1 and D.2 stated in the last section.

This paper is organized as follows. In Section 2 we define unstable topological entropies using Carathe\'{o}dory construction, and then obtain some basic properties of them. In Section 3, we study the dimension theory of unstable entropies, proving Theorem A and B and their corollaries. In Sections 4 and 5, we study the unstable topological entropies of saturated sets, and prove Theorems C and D respectively. Corollaries D.1 and D.2 are proved at the end of the paper.

\section{Unstable entropies}
\subsection{Unstable topological entropies of subsets}
We define the notions of Bowen unstable topological entropy and uppper capacity unstable topological entropy of arbitrary subsets in PHD. Our definitions and notations follow the Carathe\'{o}dory construction presented in \cite{Pesin}.

Suppose that $\CU$ is a finite open cover of $M$.
Denote $\diam (\CU):=\max\{\diam (U): U\in \CU\}$.
For $n\geq 1$, we denote by $\CW_n(\CU)$ the collections of strings
$\textbf{U}=U_{i_0}\cdots U_{i_{n-1}}$ with $U_i\in \CU$.
Given $\textbf{U}\in \CW_n(\CU)$, denote by $m(\textbf{U})=n$ the length of $\textbf{U}$, and define
\[X(\textbf{U}):=\{x\in M: f^j(x)\in U_{i_{j}}, j=0, 1, \cdots, m(\textbf{U})-1\}.\]
Let $Z\subset M$ be a nonempty set. For $s\in \RR$, define
\[\M_N^s(\CU, Z):=\inf_{\CG}\sum_{\textbf{U}\in \CG}\exp(-sm(\textbf{U})),\]
where the infimum is taken over all $\CG\subset \bigcup_{j\geq N}\CW_j(\CU)$ that covers $Z$,
i.e., $\bigcup_{\textbf{U}\in \CG}X(\textbf{U})\supset Z$.
Clearly $\M_N^s(\CU, \cdot)$ is a finite outer measure on $M$, and it determines a dimension-like characteristic as follows.
Since $\M_N^s(\CU, Z)$ increases as $N$ increases, we can define $\M^s(\CU, Z)=\lim_{N\to \infty}\M_N^s(\CU, Z)$ and
\[h_{\text{\text{B}}}(f,\CU, Z):=\inf\{s: \M^s(\CU, Z)=0\}=\sup\{s: \M^s(\CU, Z)=+\infty\}.\]
Set $h_{\text{\text{B}}}(f, Z)=\sup_{\CU}h_{\text{\text{B}}}(f,\CU, Z)$
where $\CU$ runs over all finite open covers of $M$.

Similarly, for $s\in \RR$, define
\[R_N^s(\CU, Z):=\inf_{\CG}\sum_{\textbf{U}\in \CG}\exp(-sN),\]
where the infimum is taken over all $\CG\subset \bigcup_{j\geq 1}\CW_j(\CU)$ that covers $Z$ and $m(\textbf{U})=N$.
We can define
\begin{equation*}\label{e:upper}
R^s(\CU, Z)=\limsup_{N\to \infty}R_N^s(\CU, Z)
\end{equation*}
 and
\[h_{\text{\text{UC}}}(f,\CU, Z):=\inf\{s: R^s(\CU, Z)=0\}=\sup\{s: R^s(\CU, Z)=+\infty\}.\]
Set $h_{\text{\text{UC}}}(f, Z)=\sup_{\CU}h_{\text{\text{UC}}}(f,\CU, Z)$
where $\CU$ runs over all finite open covers of $M$.

We denote by $d^u$ the metric induced by the Riemannian structure
on the unstable manifolds and let $W^u(x,\delta)$ be the open ball inside $W^u(x)$ centered at $x$
of radius $\delta$ with respect to the metric $d^u$. Let $d$ denote the metric induced by the Riemannian structure on $M$. If $\d$ is small enough, then $d^u$ is equivalent to $d$ restricted to $\overline{W^u(x,\delta)}$.
\begin{definition}\label{Defutopent1}
The \emph{Bowen unstable topological entropy} of $Z$ with respect to $f$ is defined by
\begin{equation*}
h^u_{\text{\text{B}}}(f,Z)
=\lim_{\delta \to 0}\sup_{x\in M}h_{\text{\text{B}}}(f, \overline{W^u(x,\delta)}\cap Z),
\end{equation*}

The \emph{upper capacity unstable topological entropy} of $Z$ with respect to $f$ is defined by
\begin{equation*}
h^u_{\text{\text{UC}}}(f,Z)
=\lim_{\delta \to 0}\sup_{x\in M}h_{\text{\text{UC}}}(f, \overline{W^u(x,\delta)}\cap Z).
\end{equation*}
\end{definition}

\begin{remark}
Replacing the $\limsup$ by $\liminf$ in \eqref{e:upper}, one can define the \emph{lower capacity unstable topological entropy} of $Z$ with respect to $f$ similarly. Readers interested in lower capacity can follow Pesin's book \cite{Pesin} to get corresponding results.
\end{remark}
The Bowen unstable topological entropy can be defined in an alternative way.
If $y\in W^u(x)$, let
$d^u_{n}(x,y)=\max _{0 \leq j \leq n-1}d^u(f^j(x),f^j(y))$,
and $B^u_{n}(x,\epsilon)=\{y\in W^u(x): d^u_{n}(y,x) \leq\epsilon\}$
be the \emph{$(n,\epsilon)$ $u$-Bowen ball} around $x$.
For $Z\subset \overline{W^u(x,\delta)}$, $s\geq 0$, $N\in \NN $ and $\e>0$, define
\[\M_{N,\e}^s(Z):=\inf \sum_{i}\exp(-sn_i),\]
where the infimum is taken over all finite or countable families $\{B^u_{n_i}(x_i,\e)\}$ such that $x_i\in \overline{W^u(x,\delta)}$,
$n_i\geq N$ and $\bigcup_{i}B^u_{n_i}(x_i,\e)\supset Z$.
As  {$\M_{N,\e}^s(Z)$} increases as $N$ increases, we can define $\M_\e^s(Z)=\lim_{N\to \infty}\M_{N,\e}^s(Z)$ and $\M^s(Z)=\lim_{\e\to 0}\M_\e^s(Z)$. Then define
\[\bar{h}_{\text{B}}(f, Z):=\inf\{s: \M^s(Z)=0\}=\sup\{s: \M^s(Z)=+\infty\}.\]

\begin{proposition}\label{redefine}
$\bar{h}_{\text{B}}(f, Y)=h_B(f,Y)$ for any $Y\subset \overline{W^u(x,\delta)}$. Hence for any $Z\subset M$,
$$h_{\text{\text{B}}}(f, Z)=\lim_{\delta \to 0}\sup_{x\in M}\bar{h}_{\text{\text{B}}}(f, \overline{W^u(x,\delta)}\cap Z).$$
\end{proposition}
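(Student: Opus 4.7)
The plan is to prove the first identity $\bar h_{\text{B}}(f,Y) = h_{\text{B}}(f,Y)$ by establishing both inequalities through a direct translation between string covers coming from finite open covers of $M$ and covers by $u$-Bowen balls on a single unstable plaque; the second identity then follows by plugging the first into Definition~\ref{Defutopent1}. The sole geometric input is the uniform commensurability of $d$ and $d^u$ on small pieces of unstable leaves: there exist $\epsilon_0 > 0$ and $C \ge 1$ such that $d(y,z) \le d^u(y,z) \le C\, d(y,z)$ whenever $z \in W^u(y)$ and $d(y,z) < \epsilon_0$. This is standard and follows from the H\"older continuity of $E^u$ together with compactness of $M$.

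For $\bar h_{\text{B}}(f,Y) \le h_{\text{B}}(f,Y)$ I would fix a finite open cover $\CU$ of $M$ with $\diam(\CU) < \epsilon_0$. Given any string $\mathbf{U} = U_{i_0}\cdots U_{i_{n-1}}$ with $X(\mathbf{U}) \cap Y \ne \emptyset$, choose $y_{\mathbf{U}} \in X(\mathbf{U}) \cap Y \subset \overline{W^u(x,\delta)}$. For any other $z \in X(\mathbf{U}) \cap Y$, invariance of $W^u$ gives $f^j z \in W^u(f^j y_{\mathbf{U}})$, while both points lying in $U_{i_j}$ forces $d(f^j y_{\mathbf{U}}, f^j z) \le \diam(\CU) < \epsilon_0$ for $0 \le j \le n-1$; hence $d^u(f^j y_{\mathbf{U}}, f^j z) \le C\,\diam(\CU)$ and $X(\mathbf{U}) \cap Y \subset B^u_n(y_{\mathbf{U}}, C\,\diam(\CU))$. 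Replacing each $X(\mathbf{U})$ in a string cover of $Y$ by the corresponding $u$-Bowen ball gives $\M^s_{N, C\diam(\CU)}(Y) \le \M^s_N(\CU, Y)$ for every $s$ and $N$. Letting $N \to \infty$ and then $\diam(\CU) \to 0$, and using that $\M^s_\epsilon$ is monotone in $\epsilon$, yields $\bar h_{\text{B}}(f,Y) \le h_{\text{B}}(f,Y)$.

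For the reverse inequality, fix any finite open cover $\CU$ of $M$, let $\lambda > 0$ be its Lebesgue number, and take $\epsilon < \lambda$. Since $d \le d^u$ on unstable leaves, any $u$-Bowen ball $B^u_n(y,\epsilon)$ is contained in the ambient Bowen ball $B_n(y,\epsilon)$; by the defining property of the Lebesgue number each $B(f^j y, \epsilon)$ sits inside some $U_{i_j} \in \CU$, so $B^u_n(y,\epsilon) \subset X(\mathbf{U})$ for $\mathbf{U} = U_{i_0}\cdots U_{i_{n-1}}$. Thus every $u$-Bowen cover of $Y$ induces a string cover of the same lengths, giving $\M^s_N(\CU, Y) \le \M^s_{N,\epsilon}(Y)$. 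Letting $N \to \infty$, then $\epsilon \to 0$, and finally taking the supremum over $\CU$ produces $h_{\text{B}}(f,Y) \le \bar h_{\text{B}}(f,Y)$. The second identity of the proposition follows immediately from Definition~\ref{Defutopent1} applied to $Y = \overline{W^u(x,\delta)} \cap Z$.

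The main technical nuisance is the first inequality: one has to verify that the chosen representative $y_{\mathbf{U}}$ actually belongs to the plaque $\overline{W^u(x,\delta)}$ (so that the associated $u$-Bowen ball is legitimate in the definition of $\bar h_{\text{B}}$), and then upgrade $d$-closeness of iterates to uniform $d^u$-closeness on the same unstable leaf. Both points are handled by the a priori restriction $\diam(\CU) < \epsilon_0$ together with the fact that $W^u$ is $f$-invariant, so iterates of points on a common unstable leaf remain on a common unstable leaf. Beyond this bookkeeping, the argument is the standard Bowen/Pesin comparison between the Carath\'eodory constructions defined via open covers and via dynamical balls, adapted here to the unstable foliation.
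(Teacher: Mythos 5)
Your overall strategy---the two inclusions between string covers of $Y$ and covers by $u$-Bowen balls, with a Lebesgue-number argument for one direction---is the standard comparison, and your second inequality ($h_{\text{B}}(f,Y)\le\bar h_{\text{B}}(f,Y)$) is correct as written, since $d\le d^u$ always holds. Note that the paper itself disposes of the first statement in one line: it observes that $d^u$ and $d$ are equivalent on $\overline{W^u(x,\delta)}$ and quotes Proposition 5.2 of \cite{Cli}, which carries out exactly this cover comparison; your write-up is in effect an unpacking of that citation.

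However, your first inequality has a genuine gap. The geometric input you invoke---that there exist $\epsilon_0>0$ and $C\ge 1$ with $d^u(y,z)\le C\,d(y,z)$ whenever $z\in W^u(y)$ and $d(y,z)<\epsilon_0$---is false: unstable leaves are typically dense immersed submanifolds, so for every $\epsilon_0$ there are pairs on the same leaf that are $d$-close but $d^u$-far (a linear Anosov map on $\mathbb{T}^2$ already gives this). The correct local statement needs $d^u(y,z)<\epsilon_0$, i.e.\ closeness \emph{in the leaf metric}, as a hypothesis. Consequently the step ``$f^jy_{\mathbf{U}},f^jz\in U_{i_j}$ forces $d^u(f^jy_{\mathbf{U}},f^jz)\le C\,\diam(\CU)$'' does not follow for $j\ge 1$: the iterates lie on a common leaf and are $d$-close, but that alone does not bound their leafwise distance, and your closing remark that invariance of $W^u$ handles this is not sufficient. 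The fix is the standard bootstrap: start from $d^u(y_{\mathbf{U}},z)\le 2\delta$ at time $0$, use the leafwise Lipschitz bound $d^u(fy,fz)\le L\,d^u(y,z)$ with $L=\sup_x\|D_xf|_{E^u(x)}\|$, and require $L\cdot\max\{2\delta,\,C\,\diam(\CU)\}<\epsilon_0$; then an induction on $j$ keeps $d^u(f^jy_{\mathbf{U}},f^jz)$ inside the regime where the local comparison of $d$ and $d^u$ applies, and only then do you obtain $X(\mathbf{U})\cap Y\subset B^u_n(y_{\mathbf{U}},C\,\diam(\CU))$. With that induction inserted, the rest of your argument (passing to $\M^s_{N,C\diam(\CU)}$, letting $N\to\infty$ and $\diam(\CU)\to 0$, and then deducing the second identity from Definition \ref{Defutopent1}) goes through.
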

\begin{proof}
We note that $d^u$ is equivalent to $d$ on $\overline{W^u(x,\delta)}$. Then the first statement is Proposition 5.2 in \cite{Cli} and a detailed proof is given there. The second statement follows immediately.
\end{proof}

As for upper capacity unstable topological entropy, there are two alternative ways to define it.
For $\epsilon>0$ and $n\in \NN$, a subset $S\subset \overline{W^u(x,\delta)}\cap Z $ is called an
\emph{$(n,\epsilon)$ $u$-separated set} of $\overline{W^u(x,\delta)}\cap Z$ if
$x,y\in S, x\neq y$, then $d^u_n(x,y)>\e$.
A set $E \subset W^u(x)$ is called an \emph{$(n,\epsilon)$ $u$-spanning set}
of $\overline{W^u(x, \delta)}\cap Z$ if
$\overline{W^u(x, \delta)} \cap Z\subset \bigcup_{y\in E}B^u_{n}(y,\epsilon)$.
Let $N^u(Z,\epsilon,n,x,\delta)$ be the maximal
cardinality of an $(n,\epsilon)$ $u$-separated set of $\overline{W^u(x,\delta)}\cap Z$ and
$S^u(Z,\epsilon,n,x,\delta)$ the minimal cardinality of an
$(n,\epsilon)$ $u$-spanning set of $\overline{W^u(x, \delta)}\cap Z$.
It is standard to verify that
$$
N^u(Z,2\epsilon,n,x,\delta) \leq S^u(Z,\epsilon,n,x,\delta)
\leq N^u(Z,\epsilon,n,x,\delta).
$$
Then checking similarly as in Proposition \ref{redefine}, we have
\begin{proposition}
\begin{equation*}
\begin{aligned}
 h_{\text{\text{UC}}}(f, \overline{W^u(x,\delta)}\cap Z)
&=\lim_{\epsilon \to 0}\limsup_{n\to \infty}\frac{1}{n}\log N^u(Z,\epsilon,n,x,\delta)\\
&=\lim_{\epsilon \to 0}\limsup_{n\to \infty}\frac{1}{n}
\log S^u(Z,\epsilon,n,x,\delta).
\end{aligned}
\end{equation*}
\end{proposition}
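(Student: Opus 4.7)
\emph{Plan.} I split the proof into two equalities. The first, between the $N^u$- and $S^u$-based limits, is immediate from the chain $N^u(Z,2\epsilon,n,x,\delta)\leq S^u(Z,\epsilon,n,x,\delta)\leq N^u(Z,\epsilon,n,x,\delta)$ recorded just before the statement: applying $\limsup_n\tfrac{1}{n}\log(\cdot)$ and then $\epsilon\to 0$ collapses the factor of $2$, so the two limits coincide. Thus it only remains to identify $h_{\text{UC}}(f,Y)$ with the $S^u$-based limit, where $Y:=\overline{W^u(x,\delta)}\cap Z$; this I will do by the classical Bowen-style argument comparing open covers and spanning sets, adapted to the unstable leaf, just as in the proof of Proposition \ref{redefine} cited from \cite{Cli}.

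For the bound $h_{\text{UC}}(f,Y)\leq \lim_{\epsilon\to 0}\limsup_n\tfrac{1}{n}\log S^u(Z,\epsilon,n,x,\delta)$, I fix a finite open cover $\CU$ of $M$ with Lebesgue number $2\eta$ and take $\epsilon<\eta$. For any $(n,\epsilon)$ $u$-spanning set $E$ of $Y$ and each $y\in E$, the forward image $f^j(B^u_n(y,\epsilon))$ lies in a $d^u$-ball of radius $\epsilon$ and hence (since $d\leq d^u$ on a common unstable leaf) in a $d$-ball of radius $\epsilon<\eta$, which the Lebesgue property places inside some $U_{i_j}\in\CU$. This produces a length-$n$ string $\textbf{U}=U_{i_0}\cdots U_{i_{n-1}}$ with $B^u_n(y,\epsilon)\subset X(\textbf{U})$, so $R_n^s(\CU,Y)\leq S^u(Z,\epsilon,n,x,\delta)\,e^{-sn}$ and therefore $h_{\text{UC}}(f,\CU,Y)\leq \limsup_n\tfrac{1}{n}\log S^u(Z,\epsilon,n,x,\delta)$. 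Letting $\epsilon\to 0$ and then passing to $\sup_\CU$ gives the desired inequality.

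For the reverse inequality, I fix small $\epsilon>0$ and pick a finite open cover $\CU$ of $M$ of $d$-diameter less than $\epsilon/(2C)$, where $C>0$ is a uniform constant with $d^u(p,q)\leq C\,d(p,q)$ whenever $p,q$ lie on a common unstable leaf and $d(p,q)$ is smaller than a fixed threshold. Given any length-$n$ cover $\CG\subset\CW_n(\CU)$ of $Y$, I select one point $y_\textbf{U}\in X(\textbf{U})\cap Y$ per nonempty string. For any other $z\in X(\textbf{U})\cap Y$ and each $0\leq j\leq n-1$, the points $f^j(z)$ and $f^j(y_\textbf{U})$ both lie in $U_{i_j}$ and in the unstable leaf $W^u(f^j(x))$, so $d(f^j(z),f^j(y_\textbf{U}))<\epsilon/(2C)$ and hence $d^u(f^j(z),f^j(y_\textbf{U}))<\epsilon/2<\epsilon$. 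Thus $\{y_\textbf{U}\}$ is an $(n,\epsilon)$ $u$-spanning set of $Y$, giving $S^u(Z,\epsilon,n,x,\delta)\leq |\CG|$ and consequently $\limsup_n\tfrac{1}{n}\log S^u(Z,\epsilon,n,x,\delta)\leq h_{\text{UC}}(f,\CU,Y)\leq h_{\text{UC}}(f,Y)$; sending $\epsilon\to 0$ closes the argument.

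The main obstacle is precisely the metric comparison between the ambient metric $d$ (in which the cover $\CU$ is defined) and the intrinsic leaf metric $d^u$ (in which the $u$-Bowen balls are measured). One needs a uniform Lipschitz comparison $d^u\leq C\,d$ for points on common unstable leaves at small $d$-distance, valid across all iterates $f^j(\overline{W^u(x,\delta)})$. This uniformity is granted by the uniform regularity of the unstable foliation on the compact manifold $M$, after which the rest of the argument parallels Proposition \ref{redefine}.
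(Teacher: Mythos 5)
Your overall architecture is the right one, and it is essentially the argument the paper has in mind (the paper itself only says ``checking similarly as in Proposition \ref{redefine}'' and defers to Climenhaga): collapse the $N^u$/$S^u$ comparison by the chain of inequalities, then run Bowen's two-sided comparison between covers by length-$n$ strings of $\CU$ and $(n,\epsilon)$ $u$-spanning sets. The first equality and the inequality $h_{\text{UC}}(f,Y)\leq \lim_{\epsilon\to 0}\limsup_n\frac{1}{n}\log S^u$ are correct as written (for the latter, $d\leq d^u$ on a leaf plus the Lebesgue number is exactly what is needed).

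The gap is in the reverse inequality, and it sits precisely where you located ``the main obstacle.'' The lemma you invoke --- $d^u(p,q)\leq C\,d(p,q)$ whenever $p,q$ lie on a common unstable leaf and $d(p,q)$ is below a fixed threshold --- is false in general. Unstable leaves of a partially hyperbolic (or Anosov) diffeomorphism are injectively immersed and typically dense, so a single leaf returns arbitrarily $d$-close to itself at pairs of points whose intrinsic distance $d^u$ is enormous; no uniform regularity of the foliation rescues a global statement of this form. The correct local statement is that $d^u\leq C\,d$ holds for $p$ and $q\in W^u(p,\rho_0)$, i.e.\ only once you already know the two points lie on a common plaque of bounded intrinsic size. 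In your application you must therefore \emph{propagate} smallness of $d^u(f^jz,f^jy_{\textbf{U}})$ forward by induction on $j$: at $j=0$ it holds because both points lie in $\overline{W^u(x,\delta)}$; and if $d^u(f^jz,f^jy_{\textbf{U}})\leq\rho_0/\Lambda$ with $\Lambda=\sup_M\|Df|_{E^u}\|$, then $d^u(f^{j+1}z,f^{j+1}y_{\textbf{U}})\leq\rho_0$, so $f^{j+1}z$ and $f^{j+1}y_{\textbf{U}}$ do lie on a common small plaque, the local comparison applies, and membership in the same $U_{i_{j+1}}$ of $d$-diameter $<\epsilon/(2C)$ forces $d^u(f^{j+1}z,f^{j+1}y_{\textbf{U}})<\epsilon/2\leq\rho_0/\Lambda$, closing the induction. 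With this bootstrapping inserted (and $\epsilon$, $\delta$ chosen small relative to $\rho_0/\Lambda$), your conclusion $S^u(Z,\epsilon,n,x,\delta)\leq|\CG|$ and the rest of the argument go through.
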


The following lemma tells that in Definition \ref{Defutopent1} we do not have to
let $\d\to 0$. The proof is an adaption of the one of Lemma 4.1 \cite{HHW} to
Bowen and upper capacity unstable topological entropies.
\begin{lemma}\label{smalldelta}
For $Z\subset M$,
\[h^u_{\text{B}}(f,Z)=\sup_{x\in M}h_{\text{B}}(f, \overline{W^u(x,\delta)}\cap Z)\]
and
$$h^u_{\text{UC}}(f,Z)=\sup_{x\in M}h_{\text{UC}}(f, \overline{W^u(x,\delta)}\cap Z)$$
for any $\delta >0$.
\end{lemma}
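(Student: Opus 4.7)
The plan is to show that the function $\delta \mapsto \sup_{x\in M} h_{\text{B}}(f, \overline{W^u(x,\delta)}\cap Z)$ is in fact constant on $(0,\infty)$, so its limit as $\delta \to 0$ (i.e.\ $h^u_{\text{B}}(f,Z)$) equals its value at any prescribed $\delta > 0$; the same scheme will work for upper capacity. It suffices to fix two radii $0 < \delta_0 < \delta'$ and show the corresponding suprema coincide.

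The first direction is pure monotonicity: since $\overline{W^u(x,\delta_0)}\cap Z \subset \overline{W^u(x,\delta')}\cap Z$ for every $x$, and the Carath\'{e}odory characteristics $h_{\text{B}}(f,\CU,\cdot)$ and $h_{\text{UC}}(f,\CU,\cdot)$ are nondecreasing in the set (directly from the definitions of $\M_N^s(\CU,\cdot)$ and $R_N^s(\CU,\cdot)$), taking the supremum over $\CU$ and then over $x$ yields
\[
\sup_{x\in M} h_{\text{B}}(f, \overline{W^u(x,\delta_0)}\cap Z) \leq \sup_{x\in M} h_{\text{B}}(f, \overline{W^u(x,\delta')}\cap Z).
\]
Letting $\delta_0 \to 0$ gives $h^u_{\text{B}}(f,Z) \leq \sup_{x} h_{\text{B}}(f, \overline{W^u(x,\delta')}\cap Z)$, and similarly for $h^u_{\text{UC}}$.

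For the reverse inequality, I would fix $x \in M$ and exploit that $\overline{W^u(x,\delta')}$ is compact in the intrinsic leaf metric $d^u$. This uses that $W^u(x)$ is a complete Riemannian manifold---any $d^u$-Cauchy sequence converges in $M$ extrinsically, and the limit lies in $W^u(x)$ via a local unstable disk of uniform size---so Hopf--Rinow yields compactness of closed bounded sets. Consequently the open cover $\{W^u(y,\delta_0) : y \in W^u(x)\}$ of $\overline{W^u(x,\delta')}$ admits a finite subcover with centers $x_1,\ldots,x_N \in W^u(x)$ and
\[
\overline{W^u(x,\delta')}\cap Z \subset \bigcup_{i=1}^N \overline{W^u(x_i,\delta_0)}\cap Z.
\]
Finite stability of $h_{\text{B}}(f,\cdot)$---which follows because the union of two admissible covers in the definition of $\M_N^s(\CU,\cdot)$ is again an admissible cover, and the resulting subadditivity passes through both the limit $N \to \infty$ and the supremum over $\CU$---then gives
\[
h_{\text{B}}(f, \overline{W^u(x,\delta')}\cap Z) \leq \max_{1\leq i \leq N} h_{\text{B}}(f, \overline{W^u(x_i,\delta_0)}\cap Z) \leq \sup_{y\in M} h_{\text{B}}(f, \overline{W^u(y,\delta_0)}\cap Z).
\]
Taking the supremum over $x$ and combining with the first direction completes the proof for $h^u_{\text{B}}$. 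The identical argument with $R_N^s$ in place of $\M_N^s$ handles $h^u_{\text{UC}}$.

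The main technical point is the combined bookkeeping step: verifying completeness of unstable leaves so that Hopf--Rinow applies to arbitrary $\delta' > 0$ (not just small ones), and then tracking finite stability of the Carath\'{e}odory characteristic through the outer supremum over finite open covers $\CU$ of $M$. Both are standard once spelled out, and mirror the structure of Lemma 4.1 in \cite{HHW} that this lemma is explicitly adapted from.
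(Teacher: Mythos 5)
Your proof is correct and follows essentially the same route as the paper, which gives no details but defers to Lemma 4.1 of \cite{HHW}: one inequality by monotonicity, the other by covering $\overline{W^u(x,\delta')}$ with finitely many smaller $u$-balls (via completeness of the leaf and Hopf--Rinow) and invoking finite stability of the Carath\'{e}odory characteristics. Nothing further is needed.
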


At last, we collect some basic properties of unstable topological entropies. One can follow Theorems 11.2 and 11.3 in Pesin's book \cite{Pesin} for a proof.
\begin{proposition}\label{basic}
\begin{enumerate}
  \item   $h^u_{\text{B}}(f,Z_1)\subset h^u_{\text{B}}(f,Z_2)$ and $h^u_{\text{UC}}(f,Z_1)\subset h^u_{\text{UC}}(f,Z_2)$ if $Z_1\subset Z_2$.
  \item (Countable stability) $h^u_{\text{B}}(f,\bigcup_{i\in \NN} Z_i)=\sup_{i\in\NN}h^u_{\text{B}}(f,Z_i).$
   \item $h^u_{\text{UC}}(f,\bigcup_{i\in \NN} Z_i)\geq \sup_{i\in\NN}h^u_{\text{UC}}(f,Z_i), h^u_{\text{UC}}(f,\bigcup_{i=1}^n Z_i)= \max_{i=1}^nh^u_{\text{UC}}(f,Z_i).$
  \item  $h^u_{\text{B}}(f,Z)=h^u_{\text{B}}(f,fZ)$ and $h^u_{\text{UC}}(f,Z)=h^u_{\text{UC}}(f,fZ)$.
  \end{enumerate}
\end{proposition}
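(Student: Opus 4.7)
The plan is to verify each clause by first isolating the relevant property of the Carath\'{e}odory set functions $\M_N^s(\CU,\cdot)$ and $R_N^s(\CU,\cdot)$, and then showing that the property passes through the limit $N\to\infty$, the dimensional cut in $s$, the supremum over finite open covers $\CU$, the supremum over basepoints $x\in M$, and the limit $\delta\to 0$. Since intersection with $\overline{W^u(x,\delta)}$ commutes with unions and is monotone in $Z$, these outer manipulations are essentially formal, and only the innermost statement about $\M_N^s$ or $R_N^s$ needs a genuine argument.

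For (1), monotonicity of $\M_N^s(\CU,\cdot)$ and $R_N^s(\CU,\cdot)$ in $Z$ is immediate because any admissible cover of $Z_2$ also covers $Z_1$. For (2), the key input is the $\sigma$-subadditivity $\M_N^s(\CU,\bigcup_i Z_i)\leq \sum_i \M_N^s(\CU,Z_i)$, obtained by taking the union of near-optimal covers of each $Z_i$; this forces $\M^s(\CU,\cdot)=0$ on the union whenever it vanishes on each piece, giving the nontrivial direction $\leq$, while the reverse is monotonicity. For (3), the $\limsup$ in the definition of $R^s(\CU,Z)$ obstructs any passage to an infinite sum, but for finite unions one still has $R_N^s(\CU,A\cup B)\leq R_N^s(\CU,A)+R_N^s(\CU,B)$ together with $\limsup_N(a_N+b_N)\leq\limsup_N a_N+\limsup_N b_N$, from which $h_{\text{UC}}(f,\CU,A\cup B)\leq \max\{h_{\text{UC}}(f,\CU,A),h_{\text{UC}}(f,\CU,B)\}$; combined with monotonicity this is equality, while for countable unions only the trivial direction $\geq$ survives.

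Property (4) is the main obstacle, because the definitions are localized on unstable leaves while $f$ moves points between leaves with expansion. I would first recall the classical Bowen invariance $h_{\text{B}}(f,A)=h_{\text{B}}(f,fA)$ (and likewise $h_{\text{UC}}$) valid for any $A\subset M$: if $\textbf{U}=U_{i_0}\cdots U_{i_{n-1}}\in \CW_n(\CU)$ satisfies $X(\textbf{U})\supset A$, then $X(U_{i_1}\cdots U_{i_{n-1}})\supset fA$, so near-optimal covers of $A$ produce covers of $fA$ at the cost of multiplying every summand by $e^s$, a change that cannot affect the dimensional threshold; the converse inclusion is obtained by prepending any element of $\CU$ containing $f^{-1}y$. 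To promote this to the unstable entropies I would use two geometric facts: $f^{-1}$ contracts unstable leaves, hence $f^{-1}(\overline{W^u(x,\delta)})\subset \overline{W^u(f^{-1}x,\delta)}$, and $f$ expands them by at most $C=\sup_M\|Df|_{E^u}\|$, hence $f(\overline{W^u(y,\delta)})\subset \overline{W^u(fy,C\delta)}$. The first inclusion gives $\overline{W^u(x,\delta)}\cap fZ\subset f\bigl(\overline{W^u(f^{-1}x,\delta)}\cap Z\bigr)$, so the classical invariance plus monotonicity yields $h^u_{\text{B}}(f,fZ)\leq h^u_{\text{B}}(f,Z)$. The second gives $f\bigl(\overline{W^u(y,\delta)}\cap Z\bigr)\subset \overline{W^u(fy,C\delta)}\cap fZ$, and Lemma \ref{smalldelta}, which makes the supremum independent of the chosen radius, absorbs the factor $C$ to deliver the reverse inequality. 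The identical argument, carried out on $R_N^s$ in place of $\M_N^s$, handles $h^u_{\text{UC}}$.
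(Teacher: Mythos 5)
Your proof is correct. The paper itself gives no argument for this proposition: it simply points the reader to Theorems 11.2 and 11.3 in Pesin's book, which establish (1)--(3) for general Carath\'{e}odory dimension characteristics and the $f$-invariance for ordinary Bowen entropy and capacities of subsets. Your treatment of (1)--(3) is exactly that standard route (monotonicity, $\sigma$-subadditivity of $\M_N^s(\CU,\cdot)$, finite subadditivity of $R_N^s(\CU,\cdot)$ plus the $\limsup$ obstruction), so there is nothing to compare there. Where you genuinely add value is item (4): the cited theorems only give $h_{\text{B}}(f,A)=h_{\text{B}}(f,fA)$ for subsets of $M$, and the passage to the unstable quantities is \emph{not} purely formal, precisely because $f$ permutes and expands the local unstable leaves. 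Your two inclusions $\overline{W^u(x,\delta)}\cap fZ\subset f\bigl(\overline{W^u(f^{-1}x,\delta)}\cap Z\bigr)$ and $f\bigl(\overline{W^u(y,\delta)}\cap Z\bigr)\subset \overline{W^u(fy,C\delta)}\cap fZ$, together with the observation that Lemma \ref{smalldelta} makes the outer supremum independent of the radius and hence absorbs the expansion constant $C=\sup_M\|Df|_{E^u}\|$, are exactly the missing glue that the paper's citation does not supply. This is a more complete argument than the paper offers, at the cost of being longer; the only cosmetic point worth flagging is that the inclusions ``$h^u_{\text{B}}(f,Z_1)\subset h^u_{\text{B}}(f,Z_2)$'' in the statement of (1) are of course inequalities $\leq$ between real numbers, as your proof correctly treats them.
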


\subsection{Unstable metric entropy of measures}
In \cite{HHW}, the authors give a new definition for the unstable metric entropy $h_{\mu}^u(f)$ which coincides with the one considered by Ledrappier and Young \cite{LY2}, by using measurable partitions subordinate to unstable manifolds that can be obtained by refining a finite Borel partition into pieces of unstable leaves. We recall this definition as follows.

For a partition $\a$ of $M$, let $\a(x)$ denote the element of $\a$
containing $x$.
If $\a$ and $\b$ are two partitions such that $\a(x)\subset \b(x)$
for all $x\in M$, we then write $\a \geq \b$ or $\b\leq \a$.
The partition $\a\vee\b$ is defined by $(\a\vee\b)(x)=\a(x)\cap\b(x)$ for any $x\in M$.
For a measurable partition $\b$, we denote
$\disp\b_m^n=\vee_{i=m}^n f^{-i}\b$.  In particular,
$\disp\b_0^{n-1}=\vee_{i=0}^{n-1} f^{-i}\b$.

Take $\e_0>0$ small.
Let $\P=\P_{\e_0}$ denote the set of finite Borel partitions of $M$
whose elements have diameters smaller than or equal to $\e_0$, that is,
$\diam \a:=\sup\{\diam A: A\in \a\}\le \e_0$.
For each $\b\in \P$ we can define a finer partition $\eta$ such that
$\eta(x)=\b(x)\cap W^u_\loc(x)$ for each $x\in M$, where $W^u_\loc(x)$
denotes the local unstable manifold at $x$ whose size is
greater than the diameter $\e_0$ of $\beta$.
Clearly $\eta$ is a measurable partition satisfying $\eta\ge \b$.
Let $\P^u=\P^u_{\e_0}$ denote the set of partitions $\eta$ obtained this way.

A partition $\xi$ of $M$ is said to be
\emph{subordinate to unstable manifolds} of $f$ with respect
to a measure $\mu$ if for $\mu$-almost every $x$,
$\xi(x)\subset W^u(x)$
and contains an open neighborhood of $x$ in $W^u(x)$.
It is clear that if $\a\in \P$ such that
$\mu(\partial \a)=0$ where $\partial \a:=\cup_{A\in \a} \partial A$,
then the corresponding $\eta$ given by $\eta(x)=\a(x)\cap W^u_\loc(x)$
is a partition subordinate to unstable manifolds of $f$.

Given a measure $\mu$ and measurable partitions $\a$ and $\eta$, let
$$H_\mu(\a|\eta):=-\int_M \log \mu_x^\eta(\alpha(x))d\mu(x)$$
denote the conditional entropy of $\a$ given $\eta$ with respect to $\mu$,
where $\{\mu_x^\eta: x\in M\}$ is a family of conditional measures of $\mu$ relative to $\eta$.
See \cite{R} for the discussion of conditional measures.

\begin{definition}\label{Defuentropy}
The \emph{conditional entropy of $f$ with respect to a measurable partition $\a$
given $\eta\in \P^u$} is defined as
$$h_\mu(f, \alpha|\eta)
=\limsup_{n\to \infty}\frac{1}{n}H_\mu(\alpha_0^{n-1}|\eta).
$$
The \emph{conditional entropy of $f$ given $\eta\in \P^u$}
is defined as
$$h_\mu(f|\eta)
=\sup_{\alpha \in \P}h_\mu(f, \alpha|\eta).
$$
and the \emph{unstable metric entropy of $f$} is defined as
\[
h_\mu^u(f)=\sup_{\eta\in \P^u}h_\mu(f|\eta).
\]
\end{definition}

By Corollary A.2 and Theorem B in \cite{HHW}, we have
\begin{proposition}
If $\mu$ is ergodic, then for any $\a\in \P$, $\eta\in \P^u$ subordinate to unstable manifolds and any $\e>0$,
\begin{equation*}
h_\mu^u(f)=h_\mu(f|\eta)=h_\mu(f, \alpha|\eta)
=\lim_{n\to \infty}-\frac{1}{n}\log\mu_x^{\eta}(B^u_{n}(x,\epsilon)) \quad
\mu\ae x.
\end{equation*}
\end{proposition}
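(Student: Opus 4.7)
The plan is to deduce the proposition directly from two results established in \cite{HHW}. The chain $h_\mu^u(f) = h_\mu(f|\eta) = h_\mu(f, \alpha|\eta)$ asserts that for ergodic $\mu$ the two suprema in the definition of $h_\mu^u(f)$ are attained by every admissible choice of $\eta \in \P^u$ subordinate to unstable manifolds and every $\alpha \in \P$; this is the content of Corollary A.2 of \cite{HHW}. The final equality is an unstable Brin--Katok type local entropy formula, and is exactly the statement of Theorem B of \cite{HHW}. So at the level of citations the proof is a one-line invocation; the remainder of this proposal describes how I would approach it from scratch.

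First I would establish a generator-type lemma: for any $\alpha \in \P$ with $\diam \alpha \leq \e_0$ and any $\eta \in \P^u$ with $\eta(x) \subset W^u_{\loc}(x)$, the refined partition $\eta \vee \alpha_0^{n-1}$ separates $\mu$-almost every pair of points lying on the same unstable leaf as $n \to \infty$. The key input is the uniform expansion $\|Df^{-1}|_{E^u}\| < 1$, which forces $d^u(f^j(x), f^j(y))$ to exceed $\e_0$ for some $j \in \{0,\ldots,n-1\}$ whenever $x, y$ are distinct points of $\eta(x)$ and $n$ is large enough. Combined with ergodicity and the variational characterization of conditional entropy given $\eta$, this produces both equalities in the first chain; the $\alpha$-independence drops out because any other $\alpha' \in \P$ can be approximated in $H_\mu(\cdot|\eta)$ by $\alpha_0^{n-1}\vee\eta$.

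For the Brin--Katok identity I would apply the conditional Shannon--McMillan--Breiman theorem to the leafwise conditional measures $\mu_x^\eta$ along the partition $\alpha_0^{n-1}$, obtaining
$$\lim_{n\to\infty} -\frac{1}{n}\log \mu_x^\eta\bigl(\alpha_0^{n-1}(x) \cap \eta(x)\bigr) = h_\mu(f, \alpha|\eta) = h_\mu^u(f)$$
for $\mu$-almost every $x$. The remaining step is to compare the partition element $\alpha_0^{n-1}(x) \cap \eta(x)$ with the $(n,\e)$ $u$-Bowen ball $B_n^u(x,\e)$. Since both sets shrink exponentially under iteration, I would sandwich $B_n^u(x,\e)$ between two such partition elements corresponding to generating partitions of diameter slightly above and below $\e$, and then let $\e \to 0$. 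This squeeze argument, controlled via H\"older continuity of the $E^u$ distribution and standard distortion estimates along unstable leaves, is the main technical obstacle; the rest is essentially bookkeeping within the Ledrappier--Young framework as adapted in \cite{HHW}.
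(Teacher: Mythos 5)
Your proposal matches the paper exactly: the paper gives no proof beyond the one-line citation ``By Corollary A.2 and Theorem B in \cite{HHW}'', which is precisely the pair of references you invoke for the two halves of the statement. The from-scratch sketch you append is a reasonable outline of how those results are proved in \cite{HHW}, but it is supplementary to what the paper itself does.
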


It is natural to give a definition for unstable metric entropy of any Borel probability measure on $M$ as follows.
\begin{definition}
Let $\mu$ be a Borel probability measure on $M$, $\eta\in \P^u$ subordinate to unstable manifolds and $\e>0$ small enough. Define
the \emph{unstable metric entropy} of $\mu$ as
\begin{equation*}
\underline{h}_\mu^u(f)=\int_M\underline{ h}_\mu^u(f,x) d\mu, \quad \overline{h}_\mu^u(f)=\int_M\overline{ h}_\mu^u(f,x) d\mu,
\end{equation*}
where
\begin{equation*}
\begin{aligned}
\underline{h}_\mu^u(f,x)&=\lim_{\e\to 0}\liminf_{n\to \infty}-\frac{1}{n}\log\mu_x^{\eta}(B^u_{n}(x,\epsilon)), \\
\overline{h}_\mu^u(f,x)&=\lim_{\e\to 0}\limsup_{n\to \infty}-\frac{1}{n}\log\mu_x^{\eta}(B^u_{n}(x,\epsilon)).
\end{aligned}
\end{equation*}
\end{definition}
If $\mu$ is $f$-invariant and ergodic, it is proved in (9.2) and (9.3) in \cite{LY2} that
\begin{equation*}
\underline{h}_\mu^u(f, x)=\overline{h}_\mu^u(f, x)=h^u_\mu(f)
\end{equation*}
for $\mu \ae x$ and thus $h^u_\mu(f)=\underline{h}_\mu^u(f)=\overline{h}_\mu^u(f)$.

If $\mu$ is $f$-invariant, we argue $h^u_\mu(f)=\underline{h}_\mu^u(f)=\overline{h}_\mu^u(f)$ as follows. Let $\mu=\int_{\mathcal{M}^e_f(M)}\nu d\tau(\nu)$ be the unique ergodic decomposition
where $\tau$ is a probability measure supported on $\mathcal{M}^e_f(M)$, the subspace of $\M_f(M)$ consisting of ergodic measures.
Since $\mu \mapsto h_\mu^u(f)$ is affine and upper semi-continuous by Propositions 2.14 and 2.15 in \cite{HHW}, then
\begin{equation}\label{e:ergodic}
h_\mu^u(f)=\int_{\mathcal{M}^e_f(M)}h_\nu^u(f) d\tau(\nu)
\end{equation}
by a classical result in convex analysis (cf. Fact A.2.10 on p. 356 in \cite{Do}). Let $\mathfrak{g}$ be the sub-$\sigma$-algebra consisting of all invariant subsets and let $\zeta$ be a measurable partition such that $\mathfrak{g}$ is equivalent mod zero to the $\sigma$-algebra generated by $\zeta$. Let $\{\mu_x\}$ be a family of conditional probability measures associated with $\zeta$. Then there is an invariant set $N_1\subset M$ with $\mu(N_1)=1$ such that for every $x\in N_1$, $\mu_x$ is $f$-invariant and ergodic. Moreover, \eqref{e:ergodic} implies $h_\mu^u(f)=\int_{M}h_{\mu_x}^u(f) d\mu(x)$. Recall the crucial fact that $\eta$ refines $\zeta$ by arguing similarly to the proof of Proposition 2.6 in \cite{LS}. Then there exists a measurable set $N_2\subset N_1$ with $\mu(N_2)=1$ such that for every $x\in N_2$, $\{\mu_y^\eta\}$ is a family of conditional probability measures associated with $\eta$ in the space $(N_2\cap \zeta(x), \mu_x)$. Therefore $\underline{h}^u_{\mu}(f,x)=\underline{h}^u_{\mu_x}(f,x)=h_{\mu_x}^u(f)$ and $\overline{h}^u_{\mu}(f,x)=\overline{h}^u_{\mu_x}(f,x)=h_{\mu_x}^u(f)$ for $\mu \ae x\in N_2$. Integrating it we have $\underline{h}_\mu^u(f)=\overline{h}_\mu^u(f)=\int_{M}h_{\mu_x}^u(f) d\mu(x)=h^u_\mu(f)$.

\begin{lemma}\label{independence}
Both $\underline{h}_\mu^u(f,x)$ and $\overline{h}_\mu^u(f,x)$ are independent of the choice of $\eta\in \P^u$ for $\mu$-a.e. $x$, hence so are $\underline{h}_\mu^u(f)$ and $\overline{h}_\mu^u(f)$.
\end{lemma}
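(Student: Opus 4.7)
The plan is to compare $\mu_x^{\eta_1}(B^u_n(x,\epsilon))$ and $\mu_x^{\eta_2}(B^u_n(x,\epsilon))$ through the common refinement $\eta_0 := \eta_1 \vee \eta_2$ and show that they differ only by a positive multiplicative constant depending on $x$ but not on $(n,\epsilon)$; such a constant becomes negligible after $-\tfrac{1}{n}\log$ as $n\to\infty$.

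First I would verify that $\eta_0 \in \P^u$ is itself subordinate to the unstable foliation. If $\eta_i(x) = \beta_i(x)\cap W^u_\loc(x)$ with $\beta_i\in\P$, then $\eta_0$ arises from $\beta_0 := \beta_1\vee\beta_2 \in \P$, so $\eta_0 \in \P^u$. For $\mu$-a.e.\ $x$, both $\eta_1(x)$ and $\eta_2(x)$ are subsets of $W^u(x)$ containing an open $W^u(x)$-neighborhood of $x$, and the intersection $\eta_0(x) = \eta_1(x)\cap\eta_2(x)$ inherits both properties.

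Next, because $\eta_0 \geq \eta_i$, the transitivity of Rokhlin conditional measures gives, for $\mu$-a.e.\ $x$ and every measurable $A \subseteq \eta_0(x)$,
\begin{equation*}
\mu_x^{\eta_i}(A) \;=\; c_i(x)\, \mu_x^{\eta_0}(A), \qquad c_i(x) := \mu_x^{\eta_i}(\eta_0(x)).
\end{equation*}
Since $\beta_0$ is a finite partition, the trace of $\eta_0$ on each atom $\eta_i(x)$ is a finite partition, so only finitely many of these sub-classes have positive $\mu_x^{\eta_i}$-measure and together they exhaust $\eta_i(x)$ up to a $\mu_x^{\eta_i}$-null set; integrating against $\mu$ via Rokhlin disintegration this yields $c_i(x) > 0$ for $\mu$-a.e.\ $x$. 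Subordinateness of $\eta_0$ then furnishes a measurable $r(x) > 0$ with $W^u(x, r(x)) \subseteq \eta_0(x)$, and for $\epsilon < r(x)$ and every $n\geq 1$ one has $B^u_n(x,\epsilon) \subseteq W^u(x,\epsilon) \subseteq \eta_0(x)$. Applying the displayed identity with $A = B^u_n(x,\epsilon)$ gives
\begin{equation*}
-\frac{1}{n}\log \mu_x^{\eta_i}(B^u_n(x,\epsilon)) \;=\; -\frac{1}{n}\log \mu_x^{\eta_0}(B^u_n(x,\epsilon)) \,-\, \frac{1}{n}\log c_i(x).
\end{equation*}
The last term tends to $0$ as $n\to\infty$, so $\liminf$ and $\limsup$ in $n$ agree for $\eta_i$ and $\eta_0$; letting $\epsilon\to 0$ shows that $\underline h_\mu^u(f,x)$ and $\overline h_\mu^u(f,x)$ computed with $\eta_i$ coincide with those computed with $\eta_0$ for $i=1,2$. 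Integrating yields the corresponding independence of $\underline h_\mu^u(f)$ and $\overline h_\mu^u(f)$.

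The main obstacle I anticipate is purely measure-theoretic bookkeeping: one must combine the $\mu$-null sets on which subordinateness of $\eta_1,\eta_2,\eta_0$, Rokhlin transitivity, and positivity of $c_i(x)$ each fail into a single exceptional null set outside which the pointwise identity above is valid. None of these steps is deep, but they require some care because the partitions involved are only measurable (not clopen) and the radius $r(x)$ is only a measurable function.
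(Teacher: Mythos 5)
Your proof is correct and takes essentially the same approach as the paper: both pass through the common refinement $\eta_1\vee\eta_2\in\P^u$ and use transitivity of Rokhlin conditional measures to write the two quantities as differing by a multiplicative constant $c_i(x)=\mu_x^{\eta_i}\bigl((\eta_1\vee\eta_2)(x)\bigr)>0$ independent of $n$, which disappears under $-\frac{1}{n}\log$. The paper is merely terser (it reduces WLOG to $\eta_1\leq\eta_2$ and leaves implicit the positivity of this constant and the containment of the Bowen ball in the finer atom, points you spell out).
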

\begin{proof}
We only prove the result for $\underline{h}_\mu^u(f,x)$, and the result for $\overline{h}_\mu^u(f,x)$ can be proved similarly.
Note that if $\eta_1, \eta_2 \in \P^u$, then $\eta_1\vee \eta_2\in \P^u$. So without of generality, we assume that $\eta_1\leq \eta_2$. For $\mu$-a.e. $x$, if $n$ is large enough, then $B^u_{n}(x,\epsilon)\subset \eta_2(x)\subset \eta_1(x)$. Then $\mu_x^{\eta_1}(B^u_{n}(x,\epsilon))=\mu_x^{\eta_1}(\eta_2(x))\cdot \mu_x^{\eta_2}(B^u_{n}(x,\epsilon))$. It is easy to see that
\begin{equation*}
\begin{aligned}
&\lim_{\e\to 0}\liminf_{n\to \infty}-\frac{1}{n}\log\mu_x^{\eta_1}(B^u_{n}(x,\epsilon)) \\
=&\lim_{\e\to 0}\liminf_{n\to \infty}-\frac{1}{n}\log\Big(\mu_x^{\eta_1}(\eta_2(x))\cdot \mu_x^{\eta_2}(B^u_{n}(x,\epsilon))\Big)\\
=&\lim_{\e\to 0}\liminf_{n\to \infty}-\frac{1}{n}\log\mu_x^{\eta_2}(B^u_{n}(x,\epsilon)).
\end{aligned}
\end{equation*}
So $\underline{h}_\mu^u(f,x)$ is independent of the choice of $\eta\in \P^u$.
\end{proof}
It is not hard to extend the variational principle for unstable entropies in \cite{HHW} to get
\begin{theorem}\label{vp}
Let $f: M \to M$ be a $C^1$-PHD and $Z\subset M$ an $f$-invariant and compact subset. Then
\begin{equation*}
\begin{aligned}
h_{\text{\text{UC}}}^u(f,Z)&=\sup\{h_\mu^u(f): \mu \in \M_f(M) \text{\ with\ }\mu(Z)=1\}\\
&=\sup\{h_\mu^u(f): \mu \in \M_f^e(M) \text{\ with\ }\mu(Z)=1\}.
\end{aligned}
\end{equation*}
\end{theorem}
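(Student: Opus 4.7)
The strategy is to mimic the proof of the variational principle for the whole manifold in \cite{HHW}, keeping track of the invariant compact subset $Z$ throughout. Using Lemma~\ref{smalldelta} I may fix a small $\d>0$ once and for all, so that
$$h_{\text{UC}}^u(f,Z)=\sup_{x\in M} h_{\text{UC}}\bigl(f,\overline{W^u(x,\d)}\cap Z\bigr),$$
and work with $(n,\e)$ $u$-separated and $u$-spanning sets inside the local plaques $\overline{W^u(x,\d)}\cap Z$.

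For the \emph{upper bound} $\sup_\mu h_\mu^u(f)\le h_{\text{UC}}^u(f,Z)$, take $\mu\in\M_f(M)$ with $\mu(Z)=1$ and choose $\eta\in\P^u$ subordinate to unstable manifolds arising from a partition $\b\in\P$ with $\mu(\partial\b)=0$. Since $\mu(Z)=1$ and $\eta$ is a measurable refinement of $\b$, one has $\mu_x^\eta(Z\cap W^u(x,\d))=1$ for $\mu$-a.e.\ $x$, once $\d$ is chosen larger than the size of the plaques $\eta(x)$. Combining this with the Brin–Katok-type identity from Section~2 (the definitions of $\underline h_\mu^u, \overline h_\mu^u$ together with the fact that $h_\mu^u(f)=\underline h_\mu^u(f)=\overline h_\mu^u(f)$ under $f$-invariance, already proved in the excerpt), counting $(n,\e)$ $u$-spanning sets of $\overline{W^u(x,\d)}\cap Z$ and Fatou's lemma give $h_\mu^u(f)\le h_{\text{UC}}^u(f,Z)$.

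For the \emph{lower bound} I use a Misiurewicz-type construction adapted to unstable entropy. Fix $\e>0$ and $x_0\in M$ for which the local entropy on $\overline{W^u(x_0,\d)}\cap Z$ approximates $h_{\text{UC}}^u(f,Z)$. Pick maximal $(n,\e)$ $u$-separated sets $E_n\subset \overline{W^u(x_0,\d)}\cap Z$ realising (up to $\limsup$) this number, and form the empirical measures
$$\sigma_n=\frac{1}{|E_n|}\sum_{y\in E_n}\d_y,\qquad \mu_n=\frac{1}{n}\sum_{k=0}^{n-1}f_*^k\sigma_n.$$
Because $E_n\subset Z$ and $f(Z)=Z$, every $\mu_n$ is supported on $Z$, so any weak-$*$ limit $\mu$ lies in $\M_f(M)$ with $\mu(Z)=1$. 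Taking a finite Borel partition $\a$ of $M$ whose diameter is at most $\e/2$ and whose boundary has $\mu$-measure zero, and the associated $\eta\in\P^u$, every atom of $\a_0^{n-1}\vee\eta$ contains at most one point of $E_n$ when restricted to the plaque of $x_0$. Running the standard convexity/subadditivity computation of \cite{HHW} (replacing their global setup with the localised version above, and using the upper semi-continuity of $\mu\mapsto h_\mu^u(f)$) yields $h_\mu^u(f)\ge \limsup_n \tfrac{1}{n}\log|E_n|$, hence $h_\mu^u(f)\ge h_{\text{UC}}^u(f,Z)$.

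The ergodic supremum is then obtained from the invariant one via the ergodic decomposition $\mu=\int\nu\, d\tau(\nu)$: affineness of $h_\mu^u(f)$ (as quoted around \eqref{e:ergodic}) gives $h_\mu^u(f)=\int h_\nu^u(f)\,d\tau(\nu)$, and $\mu(Z)=1$ together with the invariance of $Z$ forces $\nu(Z)=1$ for $\tau$-a.e.\ $\nu$, so some ergodic $\nu$ already achieves $h_\nu^u(f)\ge h_\mu^u(f)$. The main technical point is the Misiurewicz step: I must verify that restricting attention to a single local unstable plaque does not spoil the partition estimates for $H_\mu(\a_0^{n-1}\mid\eta)$, i.e.\ that the separated set on the plaque of $x_0$ translates, via the conditional measures of the limit $\mu$, into entropy along $\mu$-typical unstable leaves. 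This is exactly the adaptation needed and is the only non-routine ingredient beyond \cite{HHW}.
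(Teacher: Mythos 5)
Your plan is correct and follows essentially the same route the paper takes: the paper offers no written proof of this theorem beyond the remark that one extends the variational principle of \cite{HHW} to the invariant compact set $Z$, which is exactly what you do (Brin--Katok counting for the upper bound, Misiurewicz empirical measures supported on $Z$ for the lower bound, ergodic decomposition plus affineness for the ergodic supremum). The one step you single out as non-routine --- converting a $(n,\e)$ $u$-separated set on a single unstable plaque into unstable metric entropy of the weak-$*$ limit --- is precisely the content of the paper's Lemma~\ref{vplemma}, so your proposal matches the intended argument.
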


Adapting the method in the proof of the variational principle in \cite{HHW}, we have
\begin{lemma}\label{vplemma}
Let $S_n\subset \overline{W^u(x,\d)}$ be a sequence of $(n,\e)$ $u$-separated subsets and define
\[\nu_n:=\frac{1}{n\#S_n}\sum_{z\in S_n}\sum_{i=0}^{n-1}\delta_{f^i(z)}.\]
Assuming $\lim_{n\to \infty}\nu_n=\mu$, then
\[\limsup_{n\to \infty}\frac{1}{n}\log \#S_n\leq h^u_\mu(f),\]
where we denote by $\#S_n$ the cardinality of the set $S_n$.
\end{lemma}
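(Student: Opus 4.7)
The plan is to adapt the Misiurewicz-style proof of the variational principle in \cite{HHW} to the present non-invariant setting. First, $\mu$ is automatically $f$-invariant, since $\|f_*\nu_n-\nu_n\|$ in total variation is $O(1/n)$. Choose a Borel partition $\a\in\P$ with $\mu(\p\a)=0$ and $C\diam\a<\e$, where $C$ is a Lipschitz constant making $d^u\le C\,d$ on small pieces of unstable leaves; let $\eta\in\P^u$ be given by $\eta(x)=\a(x)\cap W^u_\loc(x)$. The key geometric observation is that every atom of $\a_0^{n-1}\vee\eta$ meets $S_n$ in at most one point: two such points $y,z$ would lie on a common local unstable leaf and satisfy $d(f^iy,f^iz)<\diam\a$ for all $0\le i<n$, hence $d^u(f^iy,f^iz)<C\diam\a<\e$, contradicting $u$-separation.

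Setting $\sigma_n:=\frac{1}{\#S_n}\sum_{z\in S_n}\d_z$, an atomic computation combined with the observation above yields
\[\log\#S_n \;\le\; H_{\sigma_n}(\a_0^{n-1}\mid\eta)+\log N,\]
where $N$ is an $n$-independent upper bound on the number of $\eta$-atoms intersecting $\overline{W^u(x,\d)}$ (finite by compactness). Now I apply the Misiurewicz trick. Fix $q\ge 1$ and write $n=kq+r$. For each shift $0\le s<q$, decompose $\a_0^{n-1}$ into $q$-blocks beginning at time $s$, losing at most $2q$ indices. Subadditivity of conditional entropy and averaging over $s$ give
\[q\,H_{\sigma_n}(\a_0^{n-1}\mid\eta) \;\le\; \sum_{i=0}^{n-1}H_{f^i_*\sigma_n}(\a_0^{q-1}\mid f^i_*\eta) + 2q^2\log|\a|.\]
The nontrivial technical input from \cite{HHW} is a comparison lemma—exploiting that $\eta$ is subordinate to the expanding foliation—which bounds each summand by $H_{f^i_*\sigma_n}(\a_0^{q-1}\mid\eta)+O(1)$ uniformly in $i$. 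Using concavity of $\nu\mapsto H_\nu(\a_0^{q-1}\mid\eta)$ and $\nu_n=\frac{1}{n}\sum_{i=0}^{n-1}f^i_*\sigma_n$, one arrives at
\[\frac{1}{n}\log\#S_n \;\le\; \frac{1}{q}H_{\nu_n}(\a_0^{q-1}\mid\eta) + O(q/n).\]

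Finally, since $\nu_n\to\mu$ and $\mu(\p\a)=0$, upper semi-continuity of $\nu\mapsto H_\nu(\a_0^{q-1}\mid\eta)$ at $\mu$ (valid because of the controlled boundary structure of $\eta$) yields $\limsup_n\frac{1}{n}\log\#S_n\le\frac{1}{q}H_\mu(\a_0^{q-1}\mid\eta)$, and letting $q\to\infty$ bounds the right-hand side by $h_\mu(f,\a\mid\eta)\le h_\mu(f\mid\eta)\le h^u_\mu(f)$. The main obstacle is precisely the comparison between $H_\cdot(\a_0^{q-1}\mid f^i_*\eta)$ and $H_\cdot(\a_0^{q-1}\mid\eta)$, together with establishing upper semi-continuity of conditional entropy with respect to the continuum partition $\eta$; these are the places where the specific geometry of the unstable foliation and the conditional-entropy lemmas of \cite{HHW} do essential work, the rest of the argument being a straightforward adaptation of the classical Misiurewicz scheme.
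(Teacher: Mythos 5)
Your proposal is correct and follows essentially the same route as the paper, which in fact gives no argument at all beyond the single line ``Adapting the method in the proof of the variational principle in \cite{HHW}'': the intended proof is precisely the Misiurewicz-style scheme you describe (invariance of $\mu$, at most one point of $S_n$ per atom of $\a_0^{n-1}\vee\eta$, the $q$-block subadditivity, concavity and upper semi-continuity of $\nu\mapsto H_\nu(\a_0^{q-1}\mid\eta)$ at a measure with $\mu(\p\a)=0$), with the conditional-entropy comparison for the leaf partition $\eta$ supplied by the lemmas of \cite{HHW}. You have correctly isolated the two genuinely technical inputs that must be imported from \cite{HHW}, so your write-up is, if anything, more explicit than the paper's own.
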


\section{Dimension theory for unstable entropies}

\subsection{Proof of Theorem A}
We prove Theorem A, the entropy distribution principle for Bowen unstable entropy. It is well known that the entropy distribution principle is a basic result in dimension theory, which allows us to estimate topological entropy of subsets through entropies of measures.
\begin{proof}[Proof of Theorem A(1)]
By Lemma \ref{smalldelta},
\[h^u_{\text{B}}(f,Z)=\sup_{x\in M}h_{\text{B}}(f, \overline{W^u(x,\delta)}\cap Z)\]
for any small $\d>0$. Pick $\d>0$ small enough.
It is enough to prove that $h_{\text{B}}(f, \overline{W^u(x,\delta)}\cap Z)\leq s$ for any $x\in M$.
Fix $x\in  M$. Take $\eta\in \P^u$ subordinate to unstable manifolds such that $\overline{W^u(x,\delta)}\subset \eta(x)$.
Let $\rho>0$ be sufficiently small. For each $k\geq 1$, define
\begin{equation}\label{e:levelset}
\begin{aligned}
Z_k:=\left\{y\in Z: \liminf_{n\to \infty} \frac{-\log \mu_y^\eta(B_n^u(y, \e))}{n}\leq s+\rho, \text{\ for all\ }\e\in \left(0,\frac{1}{k}\right)\right\}.
\end{aligned}
\end{equation}
Then $Z=\bigcup_{k\geq 1}Z_k$. Now fix $k\geq 1$ and $0<\e<\frac{1}{3k}$.
For each $y\in Z_k$, there exists an increasing sequence $n_j(y)\to \infty$ such that
$$\mu_y^\eta(B_{n_j(y)}^u(y, \e))\geq e^{-n_j(y)(s+\rho)} \quad \text{\ for all\ }j\geq 1.$$
For any $N\geq 1$, $\mathcal{F}=\{B_{n_j(y)}^u(y, \e): y\in Z_k\cap \overline{W^u(x,\delta)}, n_j(y)\geq N\}$
is a cover of $Z_k\cap \overline{W^u(x,\delta)}$. The following lemma is very similar to the Vitali Covering lemma.
Its proof is a slight modification of that of Lemma 1 in \cite{MW} and is omitted here.
\begin{lemma}\label{vitali}
Let $\e>0$, $X\subset \overline{W^u(x,\delta)}$, and $\mathcal{\text{B}}(\e)=\{B_n^u(y,\e): y\in X, n=1,2, \cdots\}$. For any family $\mathcal{F}\subset \mathcal{\text{B}}(\e)$,
there exists a (not necessarily countable) subfamily $\mathcal{G}\subset \mathcal{F}$ consisting of disjoint balls such that
$$\bigcup_{B\in \mathcal{F}}B\subset \bigcup_{B^u_n(y,\e)\in \mathcal{G}}B_n^u(y,3\e).$$
\end{lemma}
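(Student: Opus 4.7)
The plan is to mimic the classical Vitali covering argument, adapted to $u$-Bowen balls, with the role usually played by ``different radii'' replaced by ``different Bowen times $n$''. I will select a maximal disjoint subfamily greedily, level by level in $n$, invoking Zorn's lemma at each level to handle the fact that $\mathcal{F}$ may be uncountable.

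The key geometric input replaces the usual triangle-inequality step from Euclidean Vitali. I claim that if two balls $B^u_n(y,\e)$ and $B^u_m(z,\e)$ in $\mathcal{F}$ intersect and $n\leq m$, then $B^u_m(z,\e)\subset B^u_n(y,3\e)$. Indeed, picking $w$ in the intersection and using $d^u(f^j y, f^j w)\leq \e$, $d^u(f^j z, f^j w)\leq \e$ for $0\leq j\leq n-1$, we get $d^u(f^j y, f^j z)\leq 2\e$ on that range; for any $v\in B^u_m(z,\e)$ this yields $d^u(f^j y, f^j v)\leq 3\e$ for $0\leq j\leq n-1$, which is exactly what is needed.

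Next, fix once and for all a representation $B=B^u_n(y,\e)$ for every $B\in\mathcal{F}$ (this avoids ambiguity from elements that happen to admit several $(y,n)$ labels), and let $\mathcal{F}_n\subset\mathcal{F}$ consist of those balls with Bowen time $n$. Construct $\mathcal{G}_n\subset\mathcal{F}_n$ inductively: set
\[
\mathcal{F}_n^{\ast}=\bigl\{B\in\mathcal{F}_n: B\cap B'=\emptyset \text{ for every } B'\in\mathcal{G}_1\cup\cdots\cup\mathcal{G}_{n-1}\bigr\},
\]
and apply Zorn's lemma to the poset of pairwise disjoint subfamilies of $\mathcal{F}_n^{\ast}$ ordered by inclusion (the union of a chain of pairwise disjoint subfamilies is still pairwise disjoint, so every chain has an upper bound) to obtain a maximal pairwise disjoint family $\mathcal{G}_n$. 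Set $\mathcal{G}=\bigcup_{n\geq 1}\mathcal{G}_n$; it is pairwise disjoint by construction.

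For the covering property, fix $B=B^u_m(z,\e)\in\mathcal{F}$. If $B$ meets some $B^u_n(y,\e)\in\mathcal{G}_n$ with $n<m$, the geometric fact above gives $B\subset B^u_n(y,3\e)$. Otherwise $B\in\mathcal{F}_m^{\ast}$, so by maximality of $\mathcal{G}_m$ there exists $B^u_m(y,\e)\in\mathcal{G}_m$ with $B\cap B^u_m(y,\e)\neq\emptyset$ (possibly $B$ itself), and the geometric fact with $n=m$ yields $B\subset B^u_m(y,3\e)$. The only genuine obstacle is the potential uncountability of each $\mathcal{F}_n^{\ast}$, which precludes a straightforward countable greedy enumeration; the Zorn step dissolves this cleanly, since disjointness is preserved under arbitrary unions of chains. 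Everything else is a one-line geometric estimate on $u$-Bowen balls.
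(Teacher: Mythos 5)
Your proof is correct, and it is essentially the argument the paper has in mind: the paper omits the proof and refers to Lemma 1 of Ma--Wen, whose proof is exactly this level-by-level greedy selection of maximal disjoint subfamilies (largest balls, i.e.\ smallest Bowen times, first) combined with the $3\e$ triangle-inequality estimate $B^u_m(z,\e)\subset B^u_n(y,3\e)$ for intersecting balls with $n\le m$. Your explicit handling of uncountability via Zorn's lemma and of the labeling ambiguity of Bowen balls are the ``slight modifications'' the paper alludes to, carried out correctly.
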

By Lemma \ref{vitali}, there exists
a subfamily $\mathcal{G}=\{B_{n_i}^u(y_i, \e)\}_{i\in I}$ of $\mathcal{F}$ consisting of disjoint balls such that
$$Z_k\cap \overline{W^u(x,\delta)}\subset \bigcup_{i\in I}B_{n_i}^u(y_i, 3\e).$$
Note that $y_i \in \overline{W^u(x,\delta)}\subset \eta(x)$. Then $\mu_{y_i}^\eta=\mu_x^\eta$.
We have
$$\mu_x^\eta(B_{n_i}^u(y_i, \e))\geq e^{-n_i(s+\rho)}\quad \text{\ for all\ }i\in I.$$
The index set $I$ is at most countable since $\mu_x^\eta$ is a probability Borel measure and each $B_{n_i}^u(y_i, \e)$ has positive $\mu_x^\eta$-measure. It follows that
\begin{equation*}
\begin{aligned}
\M^{s+\rho}_{N,3\e}(\overline{W^u(x, \delta)}\cap Z_k)\leq \sum_{i\in I}e^{-n_i(s+\rho)}\leq \sum_{i\in I}\mu_x^\eta(B_{n_i}^u(y_i, \e))\leq 1.
\end{aligned}
\end{equation*}
Letting $N\to \infty$ and then $\e\to 0$, we have
$$\M^{s+\rho}(\overline{W^u(x, \delta)}\cap Z_k)\leq 1.$$
Thus
$h_{\text{B}}(f, \overline{W^u(x,\delta)}\cap Z_k)\leq s+\rho$. Then
$$h_{\text{B}}(f, \overline{W^u(x,\delta)}\cap Z)=\sup_{k\geq 1}h_{\text{B}}(f, \overline{W^u(x,\delta)}\cap Z_k)\leq s+\rho.$$
Since $\rho$ is arbitrary,
$$h_{\text{B}}(f, \overline{W^u(x,\delta)}\cap Z)\leq s$$
for all $x\in M$ as desired.
 \end{proof}

\begin{remark}
Now let us assume $\overline{h}_\mu^u(f,x)\leq s$ for every $x\in Z$ instead in Theorem A(1).
Then we have \eqref{e:levelset} with ``$\liminf$'' replaced by ``$\limsup$''.
A similar argument proves that
$$\lim_{k\to \infty}h^u_{\text{UC}}(f, Z_k)\leq s.$$
However, we can not conclude that $h^u_{\text{UC}}(f, Z)\leq s$ as upper capacity unstable entropy is  not necessarily countably stable (see Proposition \ref{basic} (3)).
\end{remark}

\begin{proof}[Proof of Theorem A(2)]
Fix $\rho>0$. For each $k\geq 1$, define
\begin{equation*}
\begin{aligned}
Z_k:=\left\{x\in Z: \liminf_{n\to \infty} \frac{-\log \mu_x^\eta(B_n^u(x, \e))}{n}\geq s-\rho, \text{\ for all\ }\e\in \left(0,\frac{1}{k}\right)\right\}.
\end{aligned}
\end{equation*}
Then $Z_k\subset Z_{k+1}$, and $Z=\bigcup_{k=1}^\infty Z_k$ since $\underline{h}_\mu^u(f,x)\geq s$ for every $x\in Z$. We fix some $k\geq 1$ such that $\mu(Z_k)>\frac{1}{2}\mu(Z)>0$.

For each $N\geq 1$, let
\begin{equation*}
\begin{aligned}
Z_{k,N}:=\left\{x\in Z_k: \frac{-\log \mu_x^\eta(B_n^u(x, \e))}{n}\geq s-\rho, \text{\ for all\ } n\geq N \text{\ and\ } \e\in \left(0,\frac{1}{k}\right)\right\}.
\end{aligned}
\end{equation*}
Then $Z_{k,N} \subset Z_{k,N+1}$ and $Z_k=\bigcup_{N=1}^\infty Z_{k,N}$. Then there exists $N_0\in \NN$ such that $\mu(Z_{k,N_0})>\frac{1}{2}\mu(Z_k)>0$. One has
$$\mu_x^\eta(B_n^u(x, \e))\leq e^{-n(s-\rho)} \quad \text{\ for all\ }x\in Z_{k,N_0}, n\geq N_0 \text{\ and\ } 0<\e<\frac{1}{k}.$$
Hence, there exists $x\in M$ such that
$\mu_x^\eta(Z_{k,N_0})=\mu_x^\eta(Z_{k,N_0} \cap \eta(x))>0$.
Fix such $x$.
Note that if $y \in \eta(x)$, then $\mu_y^\eta=\mu_x^\eta$.
We have
$$\mu_x^\eta(B_n^u(y, \e))\leq e^{-n(s-\rho)} \quad \text{\ for all\ }y\in Z_{k,N_0}\cap \eta(x), n\geq N_0 \text{\ and\ } 0<\e<\frac{1}{k}.$$

Fix a sufficiently large $N>N_0$. For any cover $\mathcal{G}=\{B^u_{n_i}(x_i, \e/2): n_i\geq N, x_i\in W^u(x)\}$ of $Z_{k,N_0} \cap\eta(x)$,
we assume without
loss of generality that
$B^u_{n_i}(x_i, \e/2) \cap Z_{k,N_0} \cap\eta(x) \neq \emptyset$ for any $i$.
Let $y_i$ be an arbitrary point in $B^u_{n_i}(x_i, \e/2) \cap Z_{k,N_0} \cap\eta(x)$.
By triangle inequality, $B^u_{n_i}(x_i, \e/2)\subset B^u_{n_i}(y_i, \e)$. We have
\begin{equation*}
\sum_i e^{-n_i(s-\rho)} \geq \sum_i\mu_x^\eta(B^u_{n_i}(y_i, \e))\geq \sum_i\mu_x^\eta(B^u_{n_i}(x_i, \e/2))\geq \mu_x^\eta(Z_{k,N_0})>0.
\end{equation*}
Now we take $\delta>0$ such that $W^u(x, \delta)\supset \eta(x)$.
It follows that
\begin{equation*}
\begin{aligned}
&\M^{s-\rho}(\overline{W^u(x, \delta)}\cap Z)\geq \M^{s-\rho}_{N,\e/2}(\overline{W^u(x, \delta)}\cap Z)\\
\geq &\M^{s-\rho}_{N,\e/2}(\overline{W^u(x, \delta)}\cap Z_{k,N_0})\geq \M^{s-\rho}_{N,\e/2}(\eta(x)\cap Z_{k,N_0})\geq \mu_x^\eta(Z_{k,N_0})>0.
\end{aligned}
\end{equation*}
Thus $h_{\text{B}}(f, \overline{W^u(x, \delta)}\cap Z)\geq s-\rho$. Letting $\rho\to 0$,
$h_{\text{B}}(f, \overline{W^u(x, \delta)}\cap Z)\geq s.$
By Lemma \ref{smalldelta},
\[h^u_{\text{B}}(f,Z)=\sup_{x\in M}h_{\text{B}}(f, \overline{W^u(x,\delta)}\cap Z).\]
We have $h^u_{\text{B}}(f,Z)\geq h_{\text{B}}(f, \overline{W^u(x, \delta)}\cap Z)\geq s$.
\end{proof}

\begin{proof}[Proof of Corollary A.1]
Let $Z\subset M$ and $\mu \in \M(M)$ with $\mu(Z)=1$.
Define
$$Z_\delta:=\{x\in Z: \underline{h}_\mu^u(f,x)\geq \underline{h}_\mu^u(f)-\d\}.$$
As $\underline{h}_\mu^u(f)=\int_M\underline{h}_\mu^u(f,x) d\mu$, it follows that $\mu(Z_\d)>0$ for any $\d>0$.
By Theorem A(2), $h_B^u(f,Z_\d)\geq \underline{h}_\mu^u(f)-\d$. Since $Z_\d\subset Z$, $h_B^u(f,Z)\geq \underline{h}_\mu^u(f)-\d$.
Letting $\d\to 0$, we get $h_B^u(f,Z)\geq \underline{h}_\mu^u(f)$ as desired.
\end{proof}

\begin{remark}
In fact, Theorem A(2) is equivalent to Corollary A.1. Indeed, the other direction can be proved as follows.
Assume that $\underline{h}_\mu^u(f,x)\geq s$ for every $x\in Z$ and $\mu(Z)>0$. Let
$\nu=\mu|_{Z}$ be the conditional measure of $\mu$ on $Z$. Then $\nu$ is a probability measure on $Z$ such that
for $\nu \ae x\in Z$,
\begin{equation*}
\begin{aligned}
\underline{h}_\nu^u(f,x)&=\lim_{\e\to 0}\liminf_{n\to \infty}-\frac{1}{n}\log\nu_x^{\eta}(B^u_{n}(x,\epsilon))\\
&=\lim_{\e\to 0}\liminf_{n\to \infty}-\frac{1}{n}\Big( \log\mu_x^{\eta}(B^u_{n}(x,\epsilon))-\log \mu(Z) \Big) \\
&=\underline{h}_\mu^u(f,x)\geq s.
\end{aligned}
\end{equation*}
It follows that $\underline{h}_\nu^u(f)=\int_M\underline{h}_\nu^u(f,x) d\nu\geq s$.
Using Corollary A.1, we know $h_B^u(f,Z)\geq \underline{h}_\nu^u(f)\geq s$
as desired.
\end{remark}

\begin{proof}[Proof of Corollary A.2]
By definition, $h_{\text{\text{B}}}^u(f,Z)\leq h_{\text{\text{UC}}}^u(f,Z)$ for any $Z\subset M$.
We only need to prove the other direction.
Now let $Z\subset M$ be compact and $f$-invariant. By Theorem \ref{vp},
$$h_{\text{\text{UC}}}^u(f,Z)=\sup\{h_\mu^u(f): \mu \in \M_f(M) \text{\ with\ }\mu(Z)=1\}.$$
On the other hand, by Theorem A(1), for any $\mu\in \M_f(M)$ with $\mu(Z)=1$, $h_\mu^u(f)\leq h_{\text{\text{B}}}^u(f,Z)$.
It follows that $h_{\text{\text{UC}}}^u(f,Z)\leq h_{\text{\text{B}}}^u(f,Z)$, completing the proof of Corollary A.2.
\end{proof}

\subsection{Variational principle for unstable entropies of subsets}
In this subsection, we prove theorem B, i.e., the variational principle for unstable entropies of subsets.
The proof is based on the variational principle for entropies of subsets established by Feng-Huang (cf. \cite{FH}).
Denote
\begin{equation*}
\underline{h}_\mu(f)=\int_M\underline{ h}_\mu(f,x) d\mu
\end{equation*}
where
\begin{equation*}
\begin{aligned}
\underline{h}_\mu(f,x)=\lim_{\e\to 0}\liminf_{n\to \infty}-\frac{1}{n}\log\mu(B_{n}(x,\epsilon)).
\end{aligned}
\end{equation*}

\begin{theorem}\label{subsetvp}(Cf. \cite{FH})
Let $f: M\to M$ be a continuous map on a compact metric space $M$. For any nonempty compact subset $K\subset M$, we have
$$h_{\text{\text{B}}}(f,K)=\sup\{\underline{h}_{\mu}(f): \mu\in \M(M) \  \text{and\ } \mu(K)=1\}.$$
\end{theorem}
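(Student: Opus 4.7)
The proof splits into two inequalities. For the \emph{easy direction}
\[
h_{\text{B}}(f,K)\ \geq\ \sup\{\underline{h}_\mu(f):\mu\in\M(M),\ \mu(K)=1\},
\]
I would argue as in Theorem A(2), but for ordinary Bowen balls $B_n(x,\e)$ in place of $u$-Bowen balls. Given $\mu$ with $\mu(K)=1$ and $s<\underline{h}_\mu(f)$, choose $\rho>0$ with $s+\rho<\underline{h}_\mu(f)$, define for each $k\geq 1$
\[
K_k:=\Bigl\{x\in K:\liminf_{n\to\infty}-\tfrac{1}{n}\log\mu(B_n(x,\e))\geq s+\rho\text{ for all }\e\in(0,1/k)\Bigr\},
\]
and observe $\mu(\bigcup_k K_k)>0$ by Fatou's lemma applied to the integral defining $\underline{h}_\mu(f)$. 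Fixing $k$ and a large $N_0$ so that the corresponding uniform subset $K_{k,N_0}$ has positive $\mu$-measure, for any cover $\{B_{n_i}(x_i,\e/2)\}$ of $K_{k,N_0}$ with $n_i\geq N\geq N_0$ pick $y_i\in B_{n_i}(x_i,\e/2)\cap K_{k,N_0}$ and use $B_{n_i}(x_i,\e/2)\subset B_{n_i}(y_i,\e)$ to get $\sum_i e^{-sn_i}\geq e^{-\rho N}\sum_i \mu(B_{n_i}(y_i,\e))\geq e^{-\rho N}\mu(K_{k,N_0})$. This yields $\M^s(K)\geq \M^s(K_{k,N_0})>0$, hence $h_{\text{B}}(f,K)\geq s$; letting $s\uparrow\underline{h}_\mu(f)$ gives the bound.

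For the \emph{hard direction} $h_{\text{B}}(f,K)\leq \sup\{\underline{h}_\mu(f):\mu(K)=1\}$, I would follow the Howroyd/Feng--Huang strategy. Fix $s<t<h_{\text{B}}(f,K)$; the task is to produce $\mu\in\M(M)$ with $\mu(K)=1$ and $\underline{h}_\mu(f,x)\geq s$ for $\mu$-a.e.\ $x$. The technical heart is a \emph{dynamical Frostman lemma}: for each sufficiently small $\e>0$ and each large $N\in\NN$, since $\M^t_{N,\e}(K)$ stays bounded below by some $c_{\e}>0$, there exists a Borel probability measure $\nu=\nu_{N,\e}$ supported on $K$ with
\[
\nu\bigl(B_n(x,\e)\bigr)\ \leq\ C_\e\,\exp(-tn)\qquad\text{for every }x\in M,\ n\geq N.
\]
This is proved by a weight-redistribution argument on finite covers: one considers weight functions on the countable family of all Bowen balls $B_n(y,\e)$, and by a compactness/min-max scheme produces a probability measure satisfying the uniform upper bound while still putting positive mass on $K$. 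This is precisely the dynamical analogue of Howroyd's construction for Hausdorff content on compact metric spaces.

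Given the Frostman measures, I would then take a diagonal weak-$*$ limit $\nu_{N_k,\e_k}\to\mu$ along $\e_k\downarrow 0$ and $N_k\uparrow\infty$ (chosen slowly enough). Since each $\nu_{N_k,\e_k}$ is supported on the compact set $K$, so is $\mu$. To transfer the Frostman bound to the limit one uses that closed Bowen balls $\overline{B_n(x,\e/2)}$ are $\mu$-continuity sets for all but countably many radii, so weak convergence yields $\mu(\overline{B_n(x,\e/2)})\leq \liminf_k \nu_{N_k,\e_k}(B_n(x,\e))\leq C_{\e}e^{-tn}$ for $n$ large; thus $\underline{h}_\mu(f,x)\geq t>s$ for every $x\in \mathrm{supp}(\mu)\subset K$, which gives $\underline{h}_\mu(f)\geq s$.

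The \textbf{main obstacle} is the dynamical Frostman lemma. Bowen balls $B_n(x,\e)$ at different scales $n$ are geometrically very different (they are exponentially thin along unstable directions), so the classical Howroyd argument cannot be copied verbatim; one must simultaneously handle a whole hierarchy of scales $n\geq N$. The delicate point is to carry out the weight redistribution in a way that keeps the Frostman bound uniform in $x\in M$ while using only compactness and the definition of $\M^t_{N,\e}$, and then to engineer the joint limit $N\to\infty$, $\e\to 0$ so that the $\liminf$-type quantity $\underline{h}_\mu(f,x)$ survives (where a tiny loss from $t$ to $s$ is deliberately built in to absorb the gap between open and closed Bowen balls and the weak-$*$ lower semicontinuity).
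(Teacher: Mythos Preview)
The paper does not prove this theorem at all: it is quoted verbatim from Feng--Huang \cite{FH} (note the ``Cf.\ \cite{FH}'' tag) and used as a black box in the proof of Theorem~B. So there is no ``paper's own proof'' to compare against; what one can do is compare your outline with the actual Feng--Huang argument.

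Your easy direction is fine and is essentially the Billingsley/entropy-distribution-principle argument.

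For the hard direction you have correctly identified the dynamical Frostman lemma as the core, but you have manufactured an obstacle that is not there. In Feng--Huang one fixes a \emph{single} small $\e>0$ with $\M^s_\e(K)>0$, applies the Frostman lemma once to obtain a probability measure $\mu$ supported on $K$ with
\[
\mu\bigl(B_n(x,\e)\bigr)\ \le\ C\,e^{-sn}\qquad\text{for all }x\in M,\ n\ge N,
\]
and that is the end of the construction. The point is that $\e'\mapsto \liminf_n -\tfrac1n\log\mu(B_n(x,\e'))$ is monotone non-decreasing as $\e'\downarrow 0$ (smaller balls have smaller measure), so the Frostman bound at the single scale $\e$ already forces $\underline{h}_\mu(f,x)\ge s$ for every $x$, hence $\underline{h}_\mu(f)\ge s$. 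No diagonal weak-$*$ limit along $\e_k\downarrow 0$ is needed, and your ``main obstacle'' (passing the Frostman bound through a weak-$*$ limit while juggling two parameters) simply does not arise. Your proposed limiting procedure would in fact be delicate, since the Frostman bound for $\nu_{N_k,\e_k}$ controls only balls of radius $\e_k$, and there is no obvious reason those bounds survive in the limit at a fixed radius; the monotonicity shortcut bypasses all of this.
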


Fix an arbitrarily small $\rho>0$.
For some $\delta>0$ small enough, by Lemma \ref{smalldelta} we can find a point $x\in M$ such that
\begin{equation}\label{e:approach1}
\begin{aligned}h_{\text{\text{B}}}(f, K\cap \overline{W^u(x,\delta)})\geq h^u_{\text{\text{B}}}(f,K)-\rho/2.
\end{aligned}
\end{equation}
By Theorem \ref{subsetvp}, there exists $\mu\in \M(M)$ such that $\mu(K\cap \overline{W^u(x,\delta)})=1$
and
\begin{equation}\label{e:approach2}
\begin{aligned}
\underline{h}_\mu(f)\geq h_{\text{\text{B}}}(f, K\cap \overline{W^u(x,\delta)})-\rho/2.
\end{aligned}
\end{equation}

As $\delta$ is very small, we can choose a partition $\eta\in\P^u$ such that $\overline{W^u(x,\delta)}\subset \eta(x)$.
That is, $\overline{W^u(x,\delta)}$ is contained in a single element of $\eta$.
Since $\text{supp}(\mu)\subset \overline{W^u(x,\delta)}\subset \eta(x)$, $\eta$ is a trivial partition with respect to $\mu$.
Therefore, $\mu=\mu_x^\eta$. Comparing definitions of $\underline{h}_\mu(f)$ and $\underline{h}^u_\mu(f)$ and using Lemma \ref{independence}, we have $\underline{h}_\mu(f)=\underline{h}^u_\mu(f)$.
By \eqref{e:approach1} and \eqref{e:approach2},
\begin{equation*}
\begin{aligned}
\underline{h}_\mu^u(f)\geq h^u_{\text{\text{B}}}(f, K)-\rho.
\end{aligned}
\end{equation*}
As $\rho$ is arbitrary, we know that $h_{\text{\text{B}}}(f,K)\leq \sup\{\underline{h}^u_{\mu}(f): \mu\in \M(M) \ \text{and\ } \mu(K)=1\}$.

For the other direction, let $\mu\in \M(M)$ with $\mu(K)=1$.
Then by Corollary A.1, $h^u_{\text{\text{B}}}(f,K)\geq \underline{h}^u_{\mu}(f)$.
The proof of Theorem B is complete.

\section{Unstable entropies of saturated sets for general PHDs}
In this section, we mainly prove Theorem C. We recall some useful notions at first.
Two points $y,z\in \overline{W^u(x,\delta)}$ are called \emph{$(\rho, n,\epsilon)$ $u$-separated} if
\[\#\{j\in \NN: d^u(f^jy,f^jz)>\e, 0\leq j\leq n-1\}\geq \rho n.\]
A set $S\subset \overline{W^u(x,\delta)}$ is \emph{$(\rho, n,\epsilon)$ $u$-separated} if any pair of distinct points of $S$ are $(\rho, n,\epsilon)$ $u$-separated.
Let $F\subset \M(M)$ be a neighborhood of $\nu\in \M_f(M)$. Denote
$$M_{n,F}:=\{x\in M: \CE_n(x)\in F\}$$
where $\CE_n(x):=\frac{1}{n}\sum_{i=0}^{n-1}\d_{f^i(x)}$.

In \cite{PS2007}, the authors define $N(F; \e,n)$ to be the maximal cardinality of an
$(n,\epsilon)$ separated subset of $ M_{n,F}$
and $N(F; \e,n, \rho)$ to be the maximal cardinality of an
$(\rho, n,\epsilon)$ separated subset of $M_{n,F}$.
To study unstable entropy, we define
$N^u(F; \e,n,x,\d)$ to be the maximal cardinality of an
$(n,\epsilon)$ $u$-separated set of $\overline{W^u(x,\delta)}\cap M_{n,F}$
and $N^u(F; \e,n, \rho, x,\d)$ to be the maximal cardinality of an
$(\rho, n,\epsilon)$ $u$-separated set of $\overline{W^u(x,\delta)}\cap M_{n,F}$.
It is easy to see that $N(F; \e,n)\geq N^u(F; \e,n, x,\delta)$ and $N(F; \e,n, \rho)\geq N^u(F; \e,n, \rho,x,\delta)$
for any $x\in M$ when $\d$ is small enough.

\begin{definition}\label{PSentro}
For $\nu\in \M_f(M)$, $\e>0$, define
\begin{equation*}
\begin{aligned}
\underline{s}(\nu; \e, x, \d):&=\inf_{F\ni \nu}\liminf_{n\to \infty}\frac{1}{n}\log N^u(F; \e,n,x,\d),\\
\underline{s}(\nu; x, \d):&=\lim_{\e\to 0}\underline{s}(\nu; \e, x, \d),\\
\underline{s}(\nu):&=\sup_{x\in M}\underline{s}(\nu; x, \d).
\end{aligned}
\end{equation*}
$\overline{s}(\nu; \e, x, \d)$, $\overline{s}(\nu; x, \d)$ and $\overline{s}(\nu)$ are defined similarly.
\end{definition}
It is standard to verify that $\underline{s}(\nu)$ and $\overline{s}(\nu)$ are independent of $\d>0$ when $\d$ is small enough.
\begin{proposition}\label{vpleq}
For any $\mu\in \M_f(M)$, we have
\[\overline{s}(\mu)\leq h^u_\mu(f).\]
\end{proposition}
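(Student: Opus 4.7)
The plan is to reduce the bound on $\overline{s}(\mu)$ to a direct application of Lemma \ref{vplemma}, by showing that the $u$-separated sets witnessing $\overline{s}(\mu;\e,x,\d)$ produce averaged empirical measures that converge back to $\mu$. Fix $x\in M$, a small $\d>0$, and $\e>0$, and suppose for contradiction that $\overline{s}(\mu;\e,x,\d)>a>h^u_\mu(f)$. By definition of the infimum and limsup, this means that for every neighborhood $F$ of $\mu$ there exist infinitely many $n$ with $\frac{1}{n}\log N^u(F;\e,n,x,\d)\geq a$.

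Next I would exploit the metrizability of the weak$^*$ topology on $\M(M)$ to fix a decreasing sequence $\{F_k\}$ of convex open neighborhoods of $\mu$ with $\bigcap_k\overline{F_k}=\{\mu\}$. A diagonal selection then yields indices $n_k\to\infty$ and, for each $k$, an $(n_k,\e)$ $u$-separated set $S_{n_k}\subset\overline{W^u(x,\d)}\cap M_{n_k,F_k}$ satisfying
\[
\frac{1}{n_k}\log\#S_{n_k}\;\geq\;a-\frac{1}{k}.
\]
Form the averaged measures $\nu_{n_k}:=\frac{1}{n_k\#S_{n_k}}\sum_{z\in S_{n_k}}\sum_{i=0}^{n_k-1}\delta_{f^iz}=\frac{1}{\#S_{n_k}}\sum_{z\in S_{n_k}}\CE_{n_k}(z)$. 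Since every summand $\CE_{n_k}(z)$ lies in the convex set $F_k$, so does $\nu_{n_k}$. Passing to a further subsequence, $\nu_{n_k}$ converges weakly to some $\nu$, and by the nested intersection property $\nu\in\bigcap_k\overline{F_k}=\{\mu\}$, so $\nu=\mu$.

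Now Lemma \ref{vplemma} applies directly to the sequence $S_{n_k}\subset\overline{W^u(x,\d)}$ and gives $\limsup_k\frac{1}{n_k}\log\#S_{n_k}\leq h^u_\mu(f)$. Combined with the lower bound $a-\frac{1}{k}$ established above, this forces $a\leq h^u_\mu(f)$, contradicting the choice of $a$. Hence $\overline{s}(\mu;\e,x,\d)\leq h^u_\mu(f)$ for every admissible $x,\d,\e$; letting $\e\to 0$ and taking the supremum over $x\in M$ yields $\overline{s}(\mu)\leq h^u_\mu(f)$.

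I do not foresee a serious obstacle: the one point requiring care is the existence of a convex neighborhood basis for $\mu$ in the weak$^*$ topology, which is standard since $\M(M)$ is a convex metrizable compact subset of a locally convex space, so each ball in a compatible metric can be replaced by its convex hull without leaving any prescribed neighborhood. Beyond that, the argument is essentially bookkeeping in the definitions of $N^u$ and $\overline{s}$, together with the already-established Lemma \ref{vplemma}.
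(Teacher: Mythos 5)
Your argument is correct and follows essentially the same route as the paper: a diagonal selection over a nested basis of convex neighborhoods of $\mu$, using convexity to place the averaged empirical measures $\nu_{n_k}$ in $F_k$ so that they converge to $\mu$, and then invoking Lemma \ref{vplemma} to contradict the assumed lower bound. The only cosmetic difference is that you fix $x,\d,\e$ and work with $\overline{s}(\mu;\e,x,\d)$ before passing to limits, whereas the paper extracts $x$, $\e_0$ and a margin $2\rho$ directly from the definition of $\overline{s}(\mu)$; the substance is identical.
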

\begin{proof}
Assume the contrary that $\overline{s}(\mu)> h^u_\mu(f)$.
By definition there exists $x\in M$ such that
\[\lim_{\e\to 0}\inf_{F\ni \nu}\limsup_{n\to \infty}\frac{1}{n}\log N^u(F; \e,n,x,\d)> h^u_\mu(f).\]
Then there exist $\e_0>0$ and $\rho>0$ such that for any $0<\e<\e_0$,
\[\inf_{F\ni \nu}\limsup_{n\to \infty}\frac{1}{n}\log N^u(F; \e,n,x,\d)\geq h^u_\mu(f)+2\rho.\]
Hence there exists a sequence of nested convex closed neighborhoods $C_1\supset C_2 \supset \cdots$ such that
$\bigcap_{n}C_n=\{\mu\}$ and
\begin{equation}\label{e:geq}
\limsup_{n\to \infty}\frac{1}{n}\log N^u(C_n; \e,n,x,\d)\geq h^u_\mu(f)+2\rho.
\end{equation}
Let $S_n$ be an $(n,\e)$ $u$-separated subset of $\overline{W^u(x,\delta)}\cap M_{n,C_n}$ with maximal cardinality and define
\[\mu_n:=\frac{1}{\#S_n}\sum_{z\in S_n}\CE_n(z).\]
Then $\lim_{n\to \infty}\mu_n=\mu$ by the choice of $C_n$. By Lemma \ref{vplemma},
we have
\[\limsup_{n\to \infty}\frac{1}{n}\log \#S_n=\limsup_{n\to \infty}\frac{1}{n}\log N^u(C_n; \e,n,x,\d)\leq h^u_\mu(f),\]
contradicting \eqref{e:geq}.
\end{proof}

\begin{proof}[Proof of Theorem  C]
Denote $s=\sup\{h^u_\mu(f): \mu\in K\}$. Let $s'-s=2\rho>0$. Since $N(F; \e,n,x,\d)$ is a non-increasing function of $\e$,
by Proposition \ref{vpleq} we have
\[\inf_{F\ni \nu}\limsup_{n\to \infty}\frac{1}{n}\log N^u(F; \e,n,x,\d)\leq h^u_\mu(f)\]
for any $x\in M, \e>0$. Fix $x\in M$. There exist a neighborhood of $\mu$, $F(\mu,\e)$ and $N(F(\mu,\e))\in \NN$ such that
\[\frac{1}{n}\log N^u(F(\mu,\e); \e,n,x,\d)\leq h^u_\mu(f)+\rho\]
for all $n\geq N(F(\mu,\e))$. As a maximal $(n,\e)$ $u$-separated subset of a set $A\subset \overline{W^u(x,\delta)}$ is also $(n,\e)$ $u$-spanning subset of $A$, we know for any  $n\geq N(F(\mu,\e))$,
\[\M_{n,\e}^{s'}(\overline{W^u(x,\delta)}\cap M_{n, F(\mu,\e)})\leq  N^u(F(\mu,\e); \e,n,x,\d)e^{-s'n}\leq e^{-\rho n}.\]

Since $K$ is compact, there exists a finite open cover of $K$ by sets $F(\mu_j,\e), j=1,\cdots, m_\e$, with $\mu_j\in K$. Recall that $\prescript{K}{}{G}=\{x:V_f(x)\cap K\neq \emptyset\}$. Then
\[\prescript{K}{}{G} \subset \bigcup_{n\geq N}\bigcup_{j=1}^{m_\e}M_{n, F(\mu_j,\e)}\]
for arbitrarily large enough $N$. Then for $N>\max_{j}N(F(\mu_j,\e))$,
\[\M_{N,\e}^{s'}(\overline{W^u(x,\delta)}\cap\prescript{K}{}{G})\leq m_\e\sum_{n\geq N}e^{-\rho n}.\]
Since $s'>s$ is arbitrary, it follows that $h_B(f,\overline{W^u(x,\delta)}\cap\prescript{K}{}{G})\leq \sup\{h^u_\mu(f): \mu\in K\}$ for any $x\in M$. Thus
$h^u_B(f,\prescript{K}{}{G})\leq \sup\{h^u_\mu(f): \mu\in K\}$.
\end{proof}

\begin{proof}[Proof of Corollary  C.1]
If $\mu\in \M_f(M)$, then
\[h^u_B(f, G_\mu)\leq h^u_B(f, \prescript{\{\mu\}}{}{G})\leq h^u_\mu(f)\]
by Theorem C.
If $\mu$ is ergodic, then $\mu(G_\mu)=1$ by Birkhoff ergodic theorem. So
$h^u_B(f, G_\mu) \geq h^u_\mu(f)$ by Corollary A.1. Thus $h^u_B(f, G_\mu) = h^u_\mu(f)$ whenever $\mu$ is ergodic.
\end{proof}

\begin{proof}[Proof of Corollary  C.2]
Note that $G_K\subset \prescript{\{\mu\}}{}{G}$ for any $\mu\in K$. It follows that
$$h^u_B(f, G_K) \leq \inf\{h^u_\mu(f): \mu\in K\}$$
by Theorem C.
\end{proof}

\begin{proof}[Proof of Corollary  C.3]
Denote
 $K:=\left\{\mu\in \M_f(M):\,  \int\varphi d\mu=a\right\}$.
 Note that by the definition of weak$^*$ topology, $R_\varphi(a)\subset \prescript{K}{}{G}$. Thus by Theorem C we complete the proof.
\end{proof}

\section{Saturated sets for PHDs with  $g$-almost product  property}
In this section, we prove Theorem D. Then we obtain a corollary on the
topological entropy of completely irregular sets, and a formula for the unstable metric entropy of ergodic measures.

A PHD $f:M\to M$ has the \emph{unstable uniform separation property}
if the following holds: For any $\kappa>0$, there exist $\rho^*>0$ and $\e^*>0$ such that for any ergodic $\mu$
and any neighborhood $F\subset \M(M)$ of $\mu$, there exists $n^*_{F,\mu,\kappa}\in \NN$ such that for $n\geq n^*_{F,\mu,\kappa}$,
$$\text{ess}\sup_{\mu}N^u(F; \e^*,n, \rho^*, x,\d)\geq e^{n(h_\mu^u(f)-\kappa)}.$$

\begin{proof}[Proof of Theorem D]
 (1) By Theorem A in \cite{HHW}, if $\mu$ is an ergodic measure, then for any $\a\in \P$, $\eta\in \P^u$,
$$h_\mu^u(f)=h_\mu(f, \a|\eta).$$

Let $\kappa>0$. Choose $\e>0$ small enough so that any finite Borel partition of $M$ lies in $\P$ whenever its diameter is smaller than $\e$. Fix a partition $\a=\{A_1, \cdots, A_k\}$ of $M$ with $\diam(\a)<\e$. We choose $0<\kappa'<\kappa$ and a constant $\rho^*>0$ such that
$2\kappa'+\rho^*<1/2$ and
\begin{equation}\label{e:kappa}
\begin{aligned}
\phi(\rho^*+2\kappa')+(\rho^*+2\kappa')\log(2k-1)<\kappa-\kappa',
\end{aligned}
\end{equation}
where $\phi(\rho):=-\rho\log \rho-(1-\rho)\log (1-\rho)$.

Let $\nu\in \M_f(M)$. We will first construct a neighborhood of $\nu$, $W_\nu\subset \M(M)$.
It is enough to prove the theorem for any ergodic $\mu\in W_\nu\cap \M_f(M)$, since $\M_f(M)$ can be covered
by a finite number of neighborhoods $W_{\nu_i}\subset \M(M), i=1, \cdots, n$.

Since $\nu$ is regular, there exist compact subsets $V_j\subset A_j, j=1, \cdots, k$ with
$$\nu(A_j\setminus V_j)<\frac{\kappa'}{4k\log(2k)}.$$
Choose $\e^*>0$ such that $d(x,y)>2\e^*$ as long as $x\in V_i, y\in V_j$, and $i\neq j$.
Let $n_1^*\in \NN$ such that $$\frac{\kappa'}{4\log k}\geq \frac{\log 2}{n\log (2k)}$$
for any $n\geq n_1^*$.
Now let $U_i\supset V_i, i=1,\cdots,k$ be $k$ open sets such that $\diam (U_i)<\e$ and $d(x,y)>\e^*$ whenever $x\in U_i, y\in U_j$, and $i\neq j$. Define a partition $\bar{\a}=\{U_1, \cdots, U_k, A_1\setminus \bigcup_jU_j, \cdots, A_k\setminus \bigcup_jU_j\}$. We use $1, \cdots, 2k$ to label the elements of $\bar{\a}$ in the order listed above. Then elements of $\bar{\a}_0^{n-1}$ can be labeled by words $w$ of length $n$ over the alphabet $\texttt{A}=\{1, \cdots, 2k\}$. Define a map $w: M\to \texttt{A}^n$ by $w(x)_j:=i$ if $f^j(x)$ is in the element of $\bar{\a}$ labeled by $i$.

Denote $K:=M\setminus \bigcup_jU_j$ and define
\[W_\nu:=\left\{\lambda\in \M(M): \lambda(K)\leq \nu(K)+\frac{\kappa'}{4\log(2k)}\right\},\]
which is a neighborhood of $\nu$. We prove the theorem for an ergodic $\mu\in W_\nu\cap \M_f(M)$. By the choice of $\e$, $\bar{\a}\in \P$, there exists $n_2^*\in \NN$ such that if $n\geq n_2^*$ such that
\begin{equation}\label{e:entro}
\begin{aligned}
H_\mu(\bar{\a}_0^{n-1}|\eta)>n(h_\mu^u(f)-\kappa'/2).
\end{aligned}
\end{equation}
By the definition of $W_\nu$,
\begin{equation*}
\begin{aligned}
\mu(K)\leq \nu(K)+\frac{\kappa'}{4\log(2k)}\leq \sum_{j=1}^k\nu(A_j\setminus V_j) +\frac{\kappa'}{4\log(2k)}<\frac{\kappa'}{2\log(2k)}.
\end{aligned}
\end{equation*}
Denote
\[Y_n:=\{x\in M: \#\{j\in \NN: w(x)_j>k, 1\leq j\leq n\}\leq n\kappa'\}.\]
Then $\lim_{n\to \infty}\mu(Y_n)= 1$ by Birkhoff ergodic theorem. Recall that $\lim_{n\to \infty}\mu(M_{n,F})= 1$  if $F$ is a neighborhood of $\mu$.
 Denote $B_n:=M_{n,F}\cap Y_n$. Then there exists $n_3^*\in \NN$ large enough such that
\begin{equation}\label{e:entropy}
\begin{aligned}
\mu(M\setminus B_n)<\frac{\kappa'}{2\log(2k)}-\frac{\log 2}{n\log(2k)}
\end{aligned}
\end{equation}
for any $n\geq n_3^*$. Let $n^*=\max\{n_1^*,n_2^*,n_3^*\}$.
Let $\mathcal{A}:=\{B_n, M\setminus B_n\}$ be a partition of $M$.

We claim that there exists $C\subset M$ with $\mu(C)>0$ such that for any $x\in C$, we have
\begin{equation*}
\begin{aligned}
H_{\mu_x^\eta|_{B_n}}(\bar{\a}_0^{n-1})\geq n(h^u_\mu(f)-\kappa').
\end{aligned}
\end{equation*}
Indeed, if we assume the contrary, then for $\mu \ae x\in M$, we have
\begin{equation}\label{e:entropy1}
\begin{aligned}
H_{\mu_x^\eta|_{B_n}}(\bar{\a}_0^{n-1})< n(h^u_\mu(f)-\kappa').
\end{aligned}
\end{equation}
By the definition of unstable metric entropy and conditional measures, we have
\begin{equation*}
\begin{aligned}
H_\mu(\bar{\a}_0^{n-1}|\eta)&=\int_MH_{\mu_x^\eta}(\bar{\a}_0^{n-1})d\mu(x)\\
&\leq \int_MH_{\mu_x^\eta}(\bar{\a}_0^{n-1}|\mathcal{A})+H_{\mu_x^\eta}(\mathcal{A})d\mu(x)\\
&\leq \log 2+ \int_M\mu_x^\eta(M\setminus B_n)H_{\mu_x^\eta|_{M\setminus B_n}}(\bar{\a}_0^{n-1})+\mu_x^\eta(B_n)H_{\mu_x^\eta|_{B_n}}(\bar{\a}_0^{n-1})d\mu(x).
\end{aligned}
\end{equation*}
Note that $H_{\mu_x^\eta|_{M\setminus B_n}}(\bar{\a}_0^{n-1})\leq n\log(2k)$. Using \eqref{e:entro} and \eqref{e:entropy1}, we have
\begin{equation*}
\begin{aligned}
n(h_\mu^u(f)-\kappa'/2)&<H_\mu(\bar{\a}_0^{n-1}|\eta)\\
&\leq \log 2+ n\log(2k)\mu(M\setminus B_n)+n(h^u_\mu(f)-\kappa')\mu(B_n)
\end{aligned}
\end{equation*}
Taking into account \eqref{e:entropy}, we are arriving at a contradiction. This proves the claim.

For $x\in C$, denote $\Xi_n(x)$ be the image of $B_n\cap \eta(x)$ under the map $w$. It follows from the above claim that
\[\#(\Xi_n(x))\geq e^{n(h^u_\mu(f)-\kappa')}.\]
Recall that the Hamming distance $d_n^H(w,w')$ of two words in $\texttt{A}^n$ is the number of different letters of $w$ and $w'$.
Let $\Xi_n'(x)\subset \Xi_n(x)$ be a subset of maximal cardinality such that $d_n^H(w,w')> n(2\kappa'+\rho^*)$
for any distinct $w,w'\in \Xi_n'(x)$. Picking exactly one point from the preimage of each word in $\Xi_n'(x)$ under the map $w$, we get a set $\Gamma_n(x)$. Then it is easy to verify that $\Gamma_n(x)\subset M_{n,F}\cap \eta(x)$ is $(\rho^*, n, \e^*)$-separated.
By the maximal cardinality of $\Xi_n'(x)$, for each $w\in \Xi_n(x)$, there exists $w'\in \Xi'_n(x)$ such that $d_n^H(w,w')\leq n(2\kappa'+\rho^*)$. On the other hand, for a given $w\in \texttt{A}^n$,
\[\#\{w'\in \texttt{A}^n: d_n^H(w,w')\leq n(2\kappa'+\rho^*)\}\leq e^{n\phi(2\kappa'+\rho^*)}(\#\texttt{A}-1)^{n(2\kappa'+\rho^*)}.\]
Combining with \eqref{e:kappa},we have
\[\#\Gamma_n(x)=\#\Xi'_n(x)\geq \frac{e^{n(h^u_\mu(f)-\kappa')}}{e^{n\phi(\rho^*+2\kappa')}(2k-1)^{n(\rho^*+2\kappa')}}\geq e^{n(h^u_\mu(f)-\kappa)}.\]
Since we can choose $\eta\in \P$ and $\d>0$ such that $\eta(x)\subset \overline{W^u(x,\delta)}$ for any $x\in M$, it follows from above that $f$ has the unstable uniform separation property.

(2) We first introduce the following lemma. The definition of $N(F; \e^*,n, \rho^*)$ is given at the beginning of Section 4.
\begin{lemma}\label{lemma-invMeas-uniform}
Let $f: M \to M$ be a $C^1$-PHD and suppose that $f$ has the $g$-almost product  property. Then for any $\kappa>0$, there exist $\rho^*>0$ and $\e^*>0$ such that for any $\mu\in \M_f(M)$
and any neighborhood $F\subset \M(M)$ of $\mu$, there exists $n^*_{F,\mu,\kappa}$ such that for any $n\geq n^*_{F,\mu,\kappa}$,
$$ N(F; \e^*,n, \rho^*)\geq e^{n(h_\mu^u(f)-\kappa)}.$$
\end{lemma}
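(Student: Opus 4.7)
The plan is to bootstrap from Part (1) of Theorem D (which gives unstable uniform separation only for ergodic measures) to all invariant measures via ergodic decomposition combined with the $g$-almost product property. The ergodic case is immediate: since every $u$-separated set is separated in the usual sense, one has $N(F;\e^*,n,\rho^*) \geq N^u(F;\e^*,n,\rho^*,x,\d) \geq e^{n(h^u_\mu(f)-\kappa)}$ upon choosing $x$ that attains the essential supremum in the unstable uniform separation estimate of Part (1). So the real content is to extend the bound to non-ergodic invariant $\mu$.

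For a non-ergodic invariant $\mu$, apply ergodic decomposition $\mu=\int_{\M^e_f(M)}\nu\, d\tau(\nu)$ and use the affineness $h^u_\mu(f)=\int h^u_\nu(f)\, d\tau(\nu)$ (from \cite{HHW}) to approximate $\mu$ in the weak* topology by a finite rational convex combination $\mu'=\sum_{i=1}^k \alpha_i \mu_i$ of ergodic measures with $\sum_i \alpha_i h^u_{\mu_i}(f)\geq h^u_\mu(f)-\kappa/4$. Pick small weak* neighborhoods $F_i$ of $\mu_i$ such that concatenating orbit pieces in proportions $\alpha_i$ drawn from $M_{m_i,F_i}$ produces empirical measures lying in $F$. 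For $n$ large and $m_i=[\alpha_i n]$, Part (1) yields $(\rho^*,m_i,\e^*)$-$u$-separated sets $S_i \subset M_{m_i,F_i}\cap \overline{W^u(x_i,\d)}$ with $\#S_i\geq e^{m_i(h^u_{\mu_i}(f)-\kappa/4)}$.

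Next, invoke the $g$-almost product property: for each tuple $\mathbf{y}=(y_1,\ldots,y_k)\in S_1\times\cdots\times S_k$, there exists $z(\mathbf{y})\in M$ whose orbit shadows $y_i$ through the $i$-th block of length $m_i$ with shadowing error at most $\e^*/4$, and with intermediate gap lengths bounded by $g(m_i)=o(m_i)$. Standard computations give $\CE_n(z(\mathbf{y}))\in F$. For $\mathbf{y}\neq \mathbf{y'}$, pick the smallest $i$ with $y_i\neq y_i'$: inside the $i$-th block the true orbits of $y_i$ and $y_i'$ differ at $\geq \rho^* m_i$ iterates by more than $\e^*$, and the gluing preserves this up to an $o(m_i)$ correction, hence the glued orbits of $z(\mathbf{y})$ and $z(\mathbf{y'})$ are distinguishable at a positive density of times with distance exceeding $\e^*/2$. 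Counting, we obtain at least $\prod_i \#S_i\geq e^{n(h^u_\mu(f)-\kappa)}$ such glued orbits, giving the desired cardinality bound.

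The main obstacle is ensuring that the resulting set is $(\rho^*,n,\e^*)$-separated with $\rho^*$ and $\e^*$ depending only on $\kappa$ and not on $\mu$ or $F$, as the statement requires. Within a single block the separation fraction $\rho^*$ only translates to a fraction $\rho^*\alpha_i$ of the full length $n$, which can be small if $\mu$ has many ergodic components, and a priori the number $k$ of components and the minimum weight $\min_i\alpha_i$ in the approximation depend on $\mu$. The fix, following the strategy of \cite{PS2007}, is to exploit upper semi-continuity of $h^u$ on the compact space $\M_f(M)$ to cover it by finitely many weak* neighborhoods on which the unstable entropy is controlled, yielding a $\mu$-uniform bound on $k$ and on $\min_i\alpha_i$, after which the loss of separation can be absorbed into a single universal choice of $\rho^*$ and $\e^*$ (at the cost of a slightly larger additive $\kappa$-term). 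The decay property $g(m_i)=o(m_i)$ of the $g$-almost product is essential to keep the gap contributions negligible so that the empirical measure of $z(\mathbf{y})$ remains inside $F$.
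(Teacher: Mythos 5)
Your overall strategy --- reduce to the ergodic case via Theorem D(1) together with $N(F;\e^*,n,\rho^*)\geq N^u(F;\e^*,n,\rho^*,x,\d)$, then handle a general invariant $\mu$ by ergodic decomposition, a finite convex combination $\sum_i\alpha_i\mu_i$ approximating $\mu$ both in $F$ and in unstable entropy, and gluing of orbit segments via the $g$-almost product property --- is exactly the paper's (which follows the entropy-density construction of Pfister--Sullivan). The ergodic step and the cardinality count are fine. The genuine gap is in your final paragraph. With your block structure (a single block of length $m_i=[\alpha_i n]$ per ergodic component), two glued orbits whose labels first differ in component $i$ are only guaranteed to be separated at about $\rho^*\alpha_i n$ times, so your family is $(\rho^*\min_i\alpha_i,n,\e^*)$-separated rather than $(\rho^*,n,\e^*)$-separated. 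Your proposed repair --- that compactness of $\M_f(M)$ plus upper semi-continuity of $\nu\mapsto h^u_\nu(f)$ yields a $\mu$-uniform bound on the number $k$ of components and on $\min_i\alpha_i$ --- does not hold: upper semi-continuity controls entropy values, not the combinatorics of the ergodic decomposition, and for a small neighborhood $F$ an approximating combination that stays in $F$ and nearly attains $h^u_\mu(f)$ may genuinely require many components with arbitrarily small weights (e.g.\ $\mu$ an infinite convex combination of ergodic measures with slowly decaying weights). So along this route $\rho^*$ cannot be chosen to depend on $\kappa$ alone, which is precisely what the lemma asserts.

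The paper circumvents this with a different block structure: all blocks have the same fixed length $n_0$ and the ergodic components are visited periodically, $\Gamma'_j:=\Gamma_{j\bmod l}$ (with multiplicities reflecting the weights in the general case), so that each component occupies a fixed proportion of short blocks distributed along the whole orbit. Whenever two label sequences differ in a block, the $(2\rho^*,n_0,2\e^*)$-separation inside that block survives the gluing (using $\rho^* n_0>2g(n_0)+1$ and shadowing error $\e<\tfrac14\e^*$), and label sequences differing in a positive fraction of the blocks produce $(\rho^*,n,\e^*)$-separated points with constants depending only on $\kappa$; to extract a large subfamily of labels that pairwise differ in a positive fraction of blocks one uses the same Hamming-distance selection already employed in the proof of Theorem D(1). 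If you replace your proportional-length blocks by this periodic short-block scheme, the rest of your argument (empirical measures landing in $F$ because $g(n_0)/n_0$ is small, and the product lower bound on the cardinality) goes through essentially unchanged.
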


\begin{proof}  Let $\kappa>0$ and $\mu\in F.$ By item (1)  for $\frac\kappa 3$   there exist $\rho^*>0$ and $\e^*>0$ such that the statement is true for the ergodic measures with $ \frac\kappa 3$ instead of $\kappa$,  $  2{\rho^*}  $ instead of $\rho^*$ and  $ 2{\e^*}  $ instead of $\e^*.$ Note that $N(F; \e,n, \rho)\geq N^u(F; \e,n, \rho,x,\delta)$.  If $\mu$ is not ergodic and $\mu\in F,$ one can use the ergodic decomposition and the $g$-almost product  property to obtain this result by following the construction in \cite{PS2005} to prove entropy-dense property (though we do not know whether unstable version of entropy-dense property is true).

More precisely, by the ergodic decomposition formula \eqref{e:ergodic}, there exist finite $l$ numbers of $\theta_i\in(0,1)$ with $\sum_{i=1}^l\theta_l=1$ and ergodic measures $\mu_i $ such that
$\nu:=\sum_{i=1}^l\theta_i\mu_i\in F$ and $h^u_\nu(f)\geq h^u_\mu(f)-\frac13\kappa.$ For simplicity, we assume $l=2$ and $\theta_1=\theta_2=\frac12$. The proof for the general case is very similar. Take two neighborhoods $F_i$ of $\mu_i$ such that $\frac12\nu_1+\frac12\nu_2\in F$ for any $\nu_i\in F_i$ $(i=1,2)$. Since $\mu_i$ are ergodic, one can choose $n_0$ large enough and $(2\rho^*,n_0,2\e^*)$-separated sets $\Gamma_i\subseteq M_{n_0, F_i}$ such that $ \# \Gamma_i\geq e^{n_0(h_{\mu_i}^u(f)-\frac\kappa 3)}$. Take $\e<\frac14 \e^*$ and we may assume that $n_0$ satisfies
\begin{equation*}\label{Eq-choose-n_k}
n_0>m(\epsilon), \, \rho^* n_0> 2 g(n_0)+1 \text{ and } \frac{g(n_0)}{n_0}\leq \epsilon,
\end{equation*}
where $m(\epsilon)$ and the function $g$ are from the definition of the $g$-almost product property (Definition 2.1 in \cite{PS2007}).
 Now we define the sequences $\{n'_j\}$,  $\{\epsilon'_j\}$ and  $\{\Gamma'_j\}$, by setting
$$n'_j:=n_0,\,\, \epsilon'_j:=\epsilon, \,\,\Gamma'_j:=\Gamma_{j\bmod 2}.$$
Let
$$G_k:=\bigcap_{j=1}^k\left(\bigcup_{x_j\in \Gamma'_j} T^{-M_{j-1}} \overline{B_{n'_j}(g; x_j,\epsilon'_j)}\right)\text{ with } M_j:=\sum_{l=1}^j n'_l.$$
Note that $G_k$ is a non-empty closed set by the $g$-almost product property. We can label each set obtained by developing this formula by the branches
of a labeled tree of height $k$. A branch is labeled by $(x_1,\cdots,x_k)$
with $x_j\in \Gamma'_j.$ Let $G:=\bigcap_{k\geq 1} G_k.$ Then by $2\e^*$-separateness of  $\Gamma'_j$ and $\e<\frac14 \e^*$, $G$ is a closed set that is the disjoint union of non-empty closed sets $G(x_1,x_2,\cdots)$ labeled by
$(x_1,x_2,\cdots)$ with $x_j \in \Gamma'_j$. Two different sequences label two different sets for which any two points in different sets are $(\rho^*,n,\e^*)$-separated provided that $\epsilon$ is small enough and $n>n_0$ large enough. Moreover, one can verify that $x\in M_{n,F}$ for any $x\in G.$ Then, one can check that the cardinality of maximal $(\rho^*,n,\e^*)$-separated sets of $G$  is larger than $$\Pi_{i=1}^{2k}\# \Gamma'_i\geq (e^{n_0(h_{\mu_1}^u(f)-\frac\kappa 3)}e^{n_0(h_{\mu_2}^u(f)-\frac\kappa 3)})^k\geq e^{(n-2n_0)(h^u_\mu(f)-\frac23\kappa)}$$ where $k=[\frac{n}{2n_0}]$. This separated set satisfies the conclusion if $n$ is large enough.
\end{proof}

In view of Lemma \ref{lemma-invMeas-uniform}, one can follow the proof of  \cite[Theorem 1.1]{PS2007} word by word to obtain item (2), by replacing $h_\mu(f)$ by $h^u_\mu(f)$. We omit the details here. This finishes the proof of Theorem D. We emphasize that from u-uniform separation one can only obtain uniform constants  $\rho^*>0$ and $\e^*>0$ for the estimate $ N(F; \e^*,n, \rho^*)\geq e^{n(h_\mu^u(f)-\kappa)}$ but it is unknown for the estimate $ N(F; \e^*,n, \rho^*)\geq e^{n(h_\mu(f)-\kappa)}.$
\end{proof}

The irregular set of ergodic average is also an important topic in the study of multifractal analysis.
For a continuous function $\varphi$  {on $M$},   define the \emph{$\varphi$-irregular set} as
\begin{eqnarray*}
  I_{\varphi}(f):= \left\{ {x\in M:} \lim_{n\to\infty}\frac1n\sum_{i=0}^{n-1}\varphi(f^ix) \ \text{diverges}\right\}.
\end{eqnarray*}
It was proved in \cite{Thompson2012} that if the system has $g$-almost product  property, then for any continuous function $\varphi,$ $I_{\varphi}$ either is empty or has full topological  entropy.
Let $C^*(M,\mathbb{R})$ denote the space of all continuous functions of $\varphi$ with $I_\varphi\neq \emptyset$ and define $CI=\bigcap_{\varphi\in  {C^*(M,\mathbb{R})}}I_\varphi$. The set $CI$ is called the \emph{completely irregular set} of $f$, see \cite{Tian}. It was proved in  \cite{Tian} that if the system has the $g$-almost product  property and uniform separation property, then $CI$ has full topological entropy. Here for PHDs, we give an estimate of topological entropy on $CI$, just provided that the systems has   $g$-almost product  property.

\begin{CorollaryD1}\label{CorD1}
Let $f: M \to M$ be a $C^1$-PHD. If $f$ has the $g$-almost product  property, then
$h_{\text{B}}(f, CI)\geq h^u_\text{B}(f)$.
\end{CorollaryD1}

\begin{proof}
Fix $\epsilon>0$. By the variational principle of unstable entropy in \cite{HHW}, we can take an invariant measure $\mu$ such that $h^u_\mu(f)> h^u_\text{B}(f)-\epsilon.$  Suppose that $I_\varphi\neq \emptyset.$ Then there exist two invariant measures with different integrals of $\varphi$, that is, there exists another invariant measure $\omega_\varphi$ such that $\int\varphi d\mu\neq \int \varphi d\omega_\varphi.$ For $\theta\in (0,1)$, define $\nu_\varphi=\theta\mu+(1-\theta)\omega_\varphi.$ By Proposition 2.14 in \cite{HHW}, $h^u_{\nu_\varphi}(f)=\theta h^u_{\mu}(f)+(1-\theta)h^u_{\nu_\varphi}(f)$. Take $\theta\in (0,1)$ close to 1 such that $h^u_{\nu_\varphi}(f)>  h^u_\text{B}(f)-\epsilon$.  Note that  $\int\varphi d\mu\neq \int \varphi d\nu_\varphi.$  Let $$K:=\overline{\text{convex}(\{\mu\}\cup\{\nu_\varphi|\,\varphi\in {C^*(M,\mathbb{R})} \})}.$$  Note that $K$ is compact and connected, and  $G_K\subset CI.$ By Proposition 2.15 in \cite{HHW}, the unstable entropy map $\nu\mapsto h^u_\nu(f)$ is upper semi-continuous. Then $\inf_{\nu\in K} h^u_\nu(f)\geq  h^u_B(f)-\epsilon.$ Thus by Theorem D,
$$h_{\text{B}}(f,CI)\geq h_{\text{B}}(f,G_K)\geq \inf_{\nu\in K} h^u_\nu(f)\geq  h^u_\text{B}(f)-\epsilon.$$
Letting $\e\to 0$, we complete the proof.
\end{proof}

Recall that $\underline{s}(\mu)$ and $\overline{s}(\mu)$ are defined in Definition \ref{PSentro}.
The following extends the formula given by Pfister-Sullivan (\cite{PS2007}) to the context of unstable metric entropy for ergodic measures.
\begin{CorollaryD2}\label{CorD2}
Let $f: M \to M$ be a $C^1$-PHD and $\mu$ an ergodic measure. Then
$$h_\mu^u(f)=\underline{s}(\mu)=\overline{s}(\mu).$$
\end{CorollaryD2}
\begin{proof}
By Proposition \ref{vpleq}, for any $\mu\in \M_f(M)$, we have $\overline{s}(\mu)\leq h^u_\mu(f).$
By Theorem D, $f$ has the u-uniform separation property, that is, for any $\kappa>0$, there exist $\rho^*>0$ and $\e^*>0$ such that for any ergodic $\mu$ and any neighborhood $F\subset \M(M)$ of $\mu$, there exists $n^*_{F,\mu,\kappa}\in \NN$ such that for $n\geq n^*_{F,\mu,\kappa}$
$$\text{ess}\sup_{\mu}N^u(F; \e^*,n, \rho^*, x,\d)\geq e^{n(h_\mu^u(f)-\kappa)}.$$
It implies that $\underline{s}(\mu)\geq h^u_\mu(f).$ Thus we have $h_\mu^u(f)=\underline{s}(\mu)=\overline{s}(\mu)$ for any ergodic measure $\mu$.
\end{proof}

\ \
\\[-2mm]
\textbf{Acknowledgement.}  X. Tian is supported by NSFC (11671093) and W. Wu is supported by NSFC (11701559 and 11571387).
W. Wu would like to thank Fudan University for hospitality where part of this work was done.

\end{document}